
\documentclass[12pt]{amsart}

\usepackage{amsfonts,amsmath,amsthm,amscd,amssymb,latexsym,amsbsy,caption}
\usepackage[noadjust]{cite}

\usepackage{tikz}

\newtheorem{theorem}{Theorem}[section]
\newtheorem{lemma}[theorem]{Lemma}
\newtheorem{corollary}[theorem]{Corollary}
\newtheorem{proposition}[theorem]{Proposition}

\newtheorem{conjecture}[theorem]{Conjecture}

\theoremstyle{definition}
\newtheorem{definition}[theorem]{Definition}

\theoremstyle{remark}

\numberwithin{equation}{section}
\allowdisplaybreaks[4]

\newcommand{\RR}{\mathbb{R}}

\makeatletter
\@namedef{subjclassname@2020}{%
  \textup{2020} Mathematics Subject Classification}
\makeatother

\newcommand{\acr}{\newline\indent}

\author{Oleksiy Dovgoshey}
\address{\textbf{Oleksiy Dovgoshey}\acr
Department of Theory of Functions\acr
Institute of Applied Mathematics and Mechanics\acr
of NASU, Slovyansk, Ukraine\newline\indent
and Department of Mathematics and Statistics, University of Turku, Turku, Finland.}
\email{oleksiy.dovgoshey@gmail.com, oleksiy.dovgoshey@utu.fi}

\title{Characterization of geodesic distance on infinite graphs}

\subjclass[2020]{Primary 26A30, Secondary 54E35, 20M20}

\keywords{Connected graph, geodesic distance on graph, infinite graph, isometric embedding, metric betweenness}

\begin{document}

\begin{abstract}
Let $G$ be a connected graph and let $d_G$ be the geodesic distance on $V(G)$. The metric spaces $(V(G), d_{G})$ are characterized up to isometry for all finite connected $G$ by David C. Kay and Gary Chartrand in 1964. The main result of the paper exands this cha\-rac\-te\-ri\-za\-tion on the infinite connected graphs. We also prove that every metric space with integer distances between its points admits an isometric embedding into $(V(G), d_G)$ for suitable $G$.
\end{abstract}

\maketitle

\section{Introduction}

Let $u,v \in V(G)$ be vertices of a connected graph $G$. Then the geodesic distance $d_G (u,v)$ is the minimum size of paths joining $u$ and $v$ in $G$. There are some important interconnections between metric properties of the space $(V(G), d_G )$ and combinatorial properties of $G$, \cite{HN, Pel2013}. It should be noted here that almost all of these interconnections were found for finite graphs. In particular the following theorem was proved by David C. Kay and Gary Chartrand in \cite{Kay_Chartrand_1965}.

\begin{theorem}\label{th1.1}
Let $(M,d)$ be a finite metric space. Then $(M,d)$ is isometric to the space $(V(G), d_G)$ for some finite connected $G$ if and only if the distance between every two points of $M$ is an integer and, for arbitrary $x,z \in M$ with $d(x,z) \geqslant 2 $, there is $y \in M$ such that $y$ lies between $x$ and $z$.
\end{theorem}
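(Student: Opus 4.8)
The plan is to prove the two implications separately; the forward direction is immediate, and the backward direction rests on a single natural construction.

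For necessity I would assume $M = V(G)$ and $d = d_G$. Integrality of $d_G$ is built into its definition as a number of edges. For the betweenness condition, given $x,z$ with $d_G(x,z) = n \geq 2$, I would pick a geodesic $x = v_0, v_1, \dots, v_n = z$ and verify that $v_1$ lies between $x$ and $z$: one has $v_1 \notin \{x,z\}$, $d_G(x,v_1) = 1$, and $d_G(v_1,z) = n-1$ (any smaller value, combined with the edge $xv_1$, would yield an $x$--$z$ path shorter than $n$), so $d_G(x,v_1) + d_G(v_1,z) = d_G(x,z)$.

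The substantive direction is sufficiency. Given $(M,d)$ satisfying the two conditions, I would let $G$ be the graph with $V(G) = M$ in which $\{x,y\}$ is an edge precisely when $d(x,y) = 1$; then $G$ is finite because $M$ is, and this is in fact the only edge set compatible with $V(G) = M$. It remains to check $d_G = d$. I would obtain connectedness of $G$ together with $d_G(x,z) \leq d(x,z)$ by strong induction on the integer $k = d(x,z)$: for $k \leq 1$ this is clear; for $k \geq 2$ the betweenness hypothesis gives $y \notin \{x,z\}$ with $d(x,y) + d(y,z) = k$, hence $d(x,y), d(y,z) \in \{1, \dots, k-1\}$, and the inductive hypothesis supplies an $x$--$y$ walk and a $y$--$z$ walk of lengths at most $d(x,y)$ and $d(y,z)$, whose concatenation is an $x$--$z$ walk of length at most $k$ (and hence contains an $x$--$z$ path of length at most $k$). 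For the opposite inequality, along any $x$--$z$ path $x = v_0, v_1, \dots, v_m = z$ in $G$ every consecutive pair satisfies $d(v_i, v_{i+1}) = 1$, so the triangle inequality gives $d(x,z) \leq \sum_{i=0}^{m-1} d(v_i, v_{i+1}) = m$; taking a shortest such path yields $d(x,z) \leq d_G(x,z)$. Thus $d_G = d$ and the isometry is the identity on $M$.

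I do not anticipate a genuine difficulty here: the argument is short once one observes that the betweenness condition is exactly what is needed to run the inductive construction of connecting walks of the correct length, and that the ``no shortcut'' inequality $d_G \geq d$ is merely the triangle inequality for $d$ read along a path. Finiteness of $M$ enters only to make $G$ finite, which already suggests that essentially the same strategy should carry over to infinite graphs.
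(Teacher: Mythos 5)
Your proof is correct and takes essentially the same route as the paper's argument (given there for the more general Theorem~\ref{th3.6}): the graph on $M$ with $\{x,y\}$ an edge iff $d(x,y)=1$, connectedness together with $d_G\leqslant d$ obtained by induction on the integer $d(x,z)$ using the betweenness hypothesis, and $d\leqslant d_G$ from the triangle inequality for $d$ read along a shortest path, with the forward direction coming from interior vertices of a geodesic. The only cosmetic difference is that you fold connectedness and the inequality $d_G\leqslant d$ into one induction, while the paper handles them in two separate steps.
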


The initial goal of the paper is to extend the above theorem to connected graphs of arbitrary infinite order.

The paper is organized as follows. Section~2 contains some definitions and facts from theory of metric spaces and graph theory.

The main results are given in Section~3. In Theorem~\ref{th3.6}, an extension of Theorem~\ref{th1.1} to arbitrary infinite graphs is proved.

In Theorem~\ref{th3.5} we prove that each metric space with integer distances between points admits an isometric embeddeding into $(V(G), d_G)$ for some connected $G$. A weak version of Theorem~\ref{th3.5} is given in Corollary~\ref{cor3.7} for metric spaces with arbitrary distances between points.

A graph-theoretical reformulation of Menger's results on isometric embeddings in the real line are given in Conjecture~\ref{con4.2} of Section~4.
An extremal property of pseudo-linear quadruples is presented as a property of induced cycles in Conjecture~\ref{con4.4}.

\section{Initial definitions and facts}

Let us recall some concepts from the theory of metric spaces and graph theory.

\begin{definition}\label{def2.0}
Let $X$ be a nonempty set. A \emph{metric} on $X$ is a function $d : X \times X \to \RR^+,$ $\RR^+ = [0, \infty)$, such that for all $x, y, z \in X$:
\begin{itemize}
\item [$(i)$] $d(x, y) = d(y, x)$, symmetry;

\item [$(ii)$] $(d(x, y) = 0) \Leftrightarrow (x = y)$, non-negativity;

\item [$(iii)$] $d(x, y) \leqslant d(x, z) + d(z, y)$, triangle inequality.
\end{itemize}
\end{definition}

The main isomorphisms of metric spaces are the so-called isometries of such spaces.

\begin{definition}\label{def2.1}
Metric spaces $(X, d)$ and $(Y, \rho)$ are \emph{isometric} if there is a bijective mapping $\Phi : X \to Y$ such that
\begin{equation}\label{def2.1_eq1}
d(x, y) = \rho(\Phi(x), \Phi(y))
\end{equation}
for all $x, y \in X.$ In this case we say that $\Phi$ is an \emph{isometry} of $(X, d)$ and $(Y, \rho)$.
\end{definition}

Let $(X, d)$ and $(Y, \rho)$ be metric spaces. Recall that $\Phi : X \to Y$ is an \emph{isometric embedding} of $(X, d)$ in $(Y, \rho)$ if \eqref{def2.1_eq1} holds for all $x, y \in X$. We say that $(X, d)$ is \emph{isometrically embedded} in $(Y, \rho)$ if there exists an isometric embedding $X \to Y$.

We will also use the concept of ``metric beetweennes''. In the theory of metric spaces, the notion of ``metric betweenness'', first appeared at Menger's paper \cite{Menger1928}.

\begin{definition}\label{def2.3}
Let $(X,d)$ be a metric space and let $x$, $y, z$ be points of $(X,d)$. One says that $y$ \emph{lies between} $x$ and $z$ if $x \neq y \neq z$ and
\begin{equation*}
d (x, z) = d (x, y) + d (y, z).
\end{equation*}
\end{definition}

The ``betweenness'' relation is fundamental for the theory of geodesics on metric spaces (see, e.g., \cite{Papadopoulos2005}).
Characteristic properties of the ternary relations that are ``metric betweenness'' relations for (real-valued) metrics were determined by Wald in \cite{Wald1931}. Later, the problem of ``metrization'' of betweenness relations (not necessarily by real-valued metrics) was considered in \cite{MZ1995PAMS, M1977FM} and \cite{Simko1999}. An infinitesimal version of the metric betweenness was studied in \cite{BD2012} and \cite{DD2010}. Paper \cite{BDKP2017AASFM} contains an explicit construction of a minimal metric space $(Y, \rho)$ such that a metric space $(X,d)$ is isometrically embedded in $(Y, \rho)$ if, among any three distinct points of $X$, there is one that lies between the others. An elemental proof of some Menger's results was given in \cite{DD2009BRaIIoMS}.

A \emph{graph} is a pair $(V, E)$ consisting of a nonempty set $V$ and a (possibly empty) set $E$ whose elements are unordered pairs of different points from $V$. For a graph $G = (V, E)$, the sets $V = V (G)$ and $E = E(G)$ are called \emph{the set of vertices (or nodes)} and \emph{the set of
edges}, respectively. We say that $G$ is \emph{nonempty} if $E(G) \neq  \varnothing$. If $\{x, y\} \in E(G)$, then the vertices $x$ and $y$ are \emph{adjacent}. Recall that a \emph{path} is a nonempty graph $P$ whose vertices can be numbered so that
\begin{equation}\label{s2_eq0}
V(P) = \{ x_0, x_1, \ldots, x_k\}, \ E(P ) = \{ \{ x_0, x_1\}, \ldots, \{x_{k-1}, x_k\} \}
\end{equation}
where $k \geqslant 1.$
In this case we say that $P$ is a path joining $x_0$ and $x_k$ and write
$$
P= (x_0, x_1, \ldots, x_k).
$$

A graph $G$ is \emph{finite} if $V (G)$ is a finite set, $|V (G)| < \infty$. The cardinal number $|V(G)|$ is called the \emph{order} of $G$. The \emph{size} of $G$ is the cardinal number $|E(G)|$. In this paper we consider graphs with arbitrary finite or infinite order.

A finite graph is called the \emph{cycle} $C_n$ if there exists an enumeration $v_1, v_2, \ldots, v_n$ of its vertices such that
$$
( \{ v_i, v_j \} \in E(C_n)) \Leftrightarrow (|i - j| = 1 \quad \textrm{or} \quad |i - j| = n - 1).
$$
In this case we write $C_n = (v_1, \ldots, v_n, v_1)$.

A graph $H$ is a \emph{subgraph} of a graph $G$ if
$$
V (H) \subseteq V (G) \quad \textrm{and} \quad E(H) \subseteq E(G).
$$
We write $H \subseteq G$ if $H$ is a subgraph of $G$.

If $H_1$ and $H_2$ are subgraphs of $G$, then the union $H_1 \cup H_2$ is a subgraph of $G$ such that
$$
V(H_1 \cup H_2) = V(H_1) \cup V(H_2) \quad \textrm{and} \quad
E(H_1 \cup H_2) = E(H_1) \cup E(H_2).
$$

A graph $G$ is \emph{connected} if for every two distinct $u, v \in V (G)$ there is a path $P \subseteq G$ joining $u$ and $v$.

A \emph{walk} in a graph $G$ is a sequence of vertices $v_0, v_1, \ldots, v_n$ such that $\{v_{i-1}, v_i\} \in E(G)$ for every $i \in \{1, \ldots, n\}$.

\begin{lemma}\label{lem2.4}
Let $G_1$ and $G_2$ be connected subgraphs of a graph $G$. If
\begin{equation}\label{lem2.4_eq1}
V(G_1) \cap V(G_2) \neq  \varnothing
\end{equation}
holds, then the union $G_1 \cup G_2$ also is a connected subgraph of $G$.
\end{lemma}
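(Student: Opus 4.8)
The plan is as follows. Since the union of two subgraphs of $G$ was defined above to again be a subgraph of $G$, the graph $G_1 \cup G_2$ is automatically a subgraph of $G$, so the only thing left to prove is that it is connected. To this end I would fix, once and for all, a vertex $w \in V(G_1) \cap V(G_2)$, which exists by \eqref{lem2.4_eq1}, and take two arbitrary distinct vertices $u, v \in V(G_1 \cup G_2) = V(G_1) \cup V(G_2)$. The goal is then to exhibit a path in $G_1 \cup G_2$ joining $u$ and $v$.

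The argument splits into cases according to which of $V(G_1)$ and $V(G_2)$ contain $u$ and $v$. If $u$ and $v$ both belong to $V(G_1)$, then connectedness of $G_1$ gives a path $P \subseteq G_1 \subseteq G_1 \cup G_2$ joining them, and symmetrically if both belong to $V(G_2)$. The remaining case, up to interchanging $G_1$ and $G_2$, is $u \in V(G_1) \setminus V(G_2)$ and $v \in V(G_2) \setminus V(G_1)$; note that then $u \neq w$ and $v \neq w$. Connectedness of $G_1$ and of $G_2$ now supplies a path $P_1 = (u = x_0, x_1, \ldots, x_k = w) \subseteq G_1$ and a path $P_2 = (w = y_0, y_1, \ldots, y_m = v) \subseteq G_2$, and the concatenated sequence $x_0, x_1, \ldots, x_k, y_1, \ldots, y_m$ is a walk from $u$ to $v$ whose edges all lie in $E(G_1) \cup E(G_2) = E(G_1 \cup G_2)$.

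To finish I would invoke the elementary fact that the existence of a walk joining two vertices of a graph implies the existence of a path joining them (if it is not treated as folklore in the paper, it follows by an easy induction on the length of the walk: as long as some vertex repeats, delete the closed segment between its first and last occurrences; this strictly shortens the walk and preserves its endpoints and the property of being a walk in the same graph, so the process terminates at a path). Applying this to the walk constructed above produces the required path in $G_1 \cup G_2$, and since $u$ and $v$ were arbitrary distinct vertices, $G_1 \cup G_2$ is connected.

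I do not expect a genuine obstacle: the statement is routine and the only points needing a little attention are the bookkeeping of the degenerate sub-cases — in particular observing that if $u$ or $v$ equals the common vertex $w$ one is back in the situation where both vertices lie in a single $G_i$ — and making the walk-to-path reduction explicit, which is natural given that the excerpt introduces walks separately from paths.
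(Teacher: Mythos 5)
Your proposal is correct and follows essentially the same route as the paper's own proof: split into the cases where both vertices lie in one $G_i$ versus one in each, concatenate a path in $G_1$ to the common vertex with a path in $G_2$ from it, and reduce the resulting walk to a path. The only difference is cosmetic: you spell out the walk-to-path reduction that the paper treats as a known fact.
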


\begin{proof}
The graph $G_1 \cup G_2$ evidently is connected if $|V(G_1 \cup G_2)| = 1$.
Let us consider two different points $x_1, x_2$ of $G_1 \cup G_2$. We must show that there is a path $P \subseteq G_1 \cup G_2$ joining $x_1$ and $x_2$. This is trivially valid if
\begin{equation}\label{lem2.4_preq1}
x_1, x_2 \in G_1 \quad \textrm{or} \quad x_1, x_2 \in G_2
\end{equation}
because $G_1$ and $G_2$ are connected.

Suppose that \eqref{lem2.4_preq1} does not hold. Then, without loss of generality, we may assume
\begin{equation}\label{lem2.4_preq2}
x_1 \in G_1 \quad \textrm{and} \quad x_2 \in G_2.
\end{equation}
Let \eqref{lem2.4_eq1} hold and let $y$ be a point of $V(G_1) \cap V(G_2)$.
It follows from~\eqref{lem2.4_preq2} and $y \in V(G_1) \cap V(G_2)$ that
\begin{equation}\label{lem2.4_preq3}
x_1, y \in V( G_1) \quad \textrm{and} \quad x_2, y \in V(G_2).
\end{equation}
Since $G_1$ and $G_2$ are connected, \eqref{lem2.4_preq3} implies the existence of paths $P_1 \subseteq G_1$ and $P_2 \subseteq G_2$ such that
$$
P_1 = (v_0, \ldots, v_k), \quad P_2 = (v_k, \ldots, v_{k+l}),
$$
with $v_0=x_1$, $v_k=y$, $v_{k+l} = x_2$, and $k \geqslant 1$, and $l \geqslant 1$. Then the sequence $v_0, \ldots, v_k, v_{k+1}, \ldots, v_{k+l}$ is a walk in $G_1 \cup G_2$ from $x_1$ to $x_2$. Since every walk from one vertex to another different vertex ``contains'' a path joining these vertices, there is a path joining $x_1$ and $x_2$ in the graph $G_1 \cup G_2$. Thus $G_1 \cup G_2$ is connected.
\end{proof}

Let us recall now the concept of geodesic distance on graphs.

\begin{definition}\label{def2.5}
Let $G$ be a connected graph. For each two vertices $u,v \in V(G)$ we define the \emph{geodesic distance} $d_G(u,v)$ as
\begin{equation}\label{def2.3_eq1}
d_G (u,v) = \left\{
\begin{array}{ll}
0, & \quad \hbox{if} \quad u=v, \\
|E(P)|, & \quad \hbox{otherwise}
\end{array}
\right.
\end{equation}
where $P$ is the path of the minimal order joining $u$ and $v$ in $G$.
\end{definition}

The following proposition is well-known but usually formulated without any proof. See, for example, \cite[p.~3]{Pel2013} or Remark~1.16 in \cite{Knauer2019}.

\begin{proposition}\label{pr2.4}
Let $G$ be a connected graph. Then the geodesic distance $d_G$ is a metric on $V(G)$.
\end{proposition}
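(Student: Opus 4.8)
The plan is to verify directly the three axioms of Definition~\ref{def2.0} for $d_G$, after first checking that $d_G$ is well defined and real valued (in fact nonnegative-integer valued). For well-definedness, I would note that for distinct $u,v \in V(G)$ the connectedness of $G$ guarantees that the set
\[
\mathcal{S}(u,v) = \{\, |E(P)| : P \subseteq G \text{ is a path joining } u \text{ and } v \,\}
\]
is a nonempty subset of the positive integers, so by the well-ordering principle it has a least element; this is precisely $d_G(u,v)$, and it is a positive integer since every path has size $k \geqslant 1$. Together with $d_G(u,u) = 0$ this already establishes axiom~$(ii)$.

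For the symmetry axiom~$(i)$ I would observe that if $P = (x_0, x_1, \ldots, x_k)$ is a path joining $u = x_0$ and $v = x_k$, then the reversed sequence $(x_k, x_{k-1}, \ldots, x_0)$ determines a path joining $v$ and $u$ with the same vertex set and the same edge set, hence the same size. Consequently $\mathcal{S}(u,v) = \mathcal{S}(v,u)$, so $d_G(u,v) = d_G(v,u)$; the case $u = v$ is trivial.

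The only axiom requiring a genuine argument is the triangle inequality~$(iii)$, namely $d_G(u,w) \leqslant d_G(u,v) + d_G(v,w)$. If two of the three vertices coincide the inequality is immediate from $(ii)$ and the nonnegativity of $d_G$, so I would assume $u$, $v$, $w$ pairwise distinct. Choose paths $P_1 \subseteq G$ joining $u$ and $v$ with $|E(P_1)| = d_G(u,v)$, and $P_2 \subseteq G$ joining $v$ and $w$ with $|E(P_2)| = d_G(v,w)$. Writing their vertex enumerations consecutively produces a walk in $G$ from $u$ to $w$ of length $d_G(u,v) + d_G(v,w)$; as already used in the proof of Lemma~\ref{lem2.4}, every walk joining two distinct vertices contains a path joining them, and such a path $P$ satisfies $|E(P)| \leqslant d_G(u,v) + d_G(v,w)$. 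Then $d_G(u,w) \leqslant |E(P)| \leqslant d_G(u,v) + d_G(v,w)$ by the minimality in Definition~\ref{def2.5}.

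I do not expect any serious obstacle here; the most delicate point is simply making precise the passage from a walk to a path contained in it, together with the accompanying bound on its size — the same elementary fact invoked in Lemma~\ref{lem2.4} — and carefully disposing of the degenerate cases in which some of $u$, $v$, $w$ coincide.
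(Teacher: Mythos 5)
Your proposal is correct, but your proof of the triangle inequality follows a genuinely different route from the paper's. You concatenate two minimal paths $P_1$ (joining $u,v$) and $P_2$ (joining $v,w$) into a walk of length $d_G(u,v)+d_G(v,w)$ and then extract from that walk a $u$--$w$ path of no greater size; this requires the \emph{quantitative} form of the walk-to-path fact (shortcutting at repeated vertices only shortens the walk), which the paper invokes only in its qualitative form, and only inside Lemma~\ref{lem2.4} to get connectedness. The paper instead forms the union $G_1 = P_{u,w}\cup P_{w,v}$, gets its connectedness from Lemma~\ref{lem2.4}, and then bounds everything by counting vertices: any path in $G_1$ has at most $|V(G_1)|$ vertices, so a shortest $u$--$v$ path in $G_1$ has size at most $|V(G_1)|-1$, while inclusion--exclusion together with $w\in V(P_{u,w})\cap V(P_{w,v})$ gives $d_G(u,w)+d_G(w,v)\geqslant |V(G_1)|-1$; combined with $d_G(u,v)\leqslant d_{G_1}(u,v)$ this yields the inequality without ever needing a length bound on an extracted path. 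Your argument is the more standard and direct one (and also establishes well-definedness and symmetry explicitly, which the paper treats as immediate); the paper's cardinality argument buys a proof that reuses Lemma~\ref{lem2.4} and sidesteps formalizing the shortcutting step, at the cost of a somewhat less transparent counting computation. Do make the walk-to-path extraction with its size bound precise, as you note, since that is the only nontrivial ingredient your route relies on.
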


\begin{proof}
It directly follows from~\eqref{def2.3_eq1} that the function $d_G$ is symmetric and, moreover, $d_G (u,v) =0$ holds iff $u=v$. Thus, by Definition~\ref{def2.0}, the function $d_G$ is a metric on $V(G)$ iff the triangle inequality
\begin{equation}\label{pr2.4_preq1}
d_G(u,v) \leqslant d_G (u,w) + d_G(w,v)
\end{equation}
holds for all $u,v,w \in V(G)$.

Inequality~\eqref{pr2.4_preq1} is trivially valid of if $u=v$, or $u =w$, or $w =v$. Suppose that $u,v$ and $w$ are pairwise distinct.

Let us consider two paths $P_{u,w} \subseteq G$ and $P_{w,v} \subseteq G$ such that: $P_{u,w}$ connects $u$ and $w$; $P_{w,v}$ connects $w$ and $v$;
\begin{equation}\label{pr2.4_preq2}
d_G(u,w) = | E(P_{u,w})|
\end{equation}
and
\begin{equation}\label{pr2.4_preq3}
d_G(w,v) = | E(P_{w,v})|.
\end{equation}
Write
\begin{equation}\label{pr2.4_preq4}
G_1 : = P_{u,w} \cup P_{w,v}.
\end{equation}
Then $w \in V( P_{u,w}) \cap V(P_{w,v})$ and, consequently, $G_1$ is a connected subgraph of $G$ by Lemma~\ref{lem2.4}. It follows from \eqref{pr2.4_preq4} that
$$
u,v \in V(G_1).
$$
Hence the geodesic distance $d_{G_1} (u,v)$ is correctly defined, and, by Definition~\ref{def2.5}, the inequality
\begin{equation}\label{pr2.4_preq5}
d_G(u,v) \leqslant d_{G_1} (u,w)
\end{equation}
holds.

Inequality~\eqref{pr2.4_preq5} implies that \eqref{pr2.4_preq1} holds if
\begin{equation}\label{pr2.4_preq6}
d_{G_1}(u,v) \leqslant d_G (u,w) + d_G(w,v).
\end{equation}

Let $P^{1}_{u,v}$ be a path joining $u$ and $v$ in $G_1$ such that
\begin{equation}\label{pr2.4_preq7}
d_G(u,v) =|E(P^1_{u,v})|.
\end{equation}

It follows directly from the definition of paths that
\begin{equation}\label{pr2.4_preq8}
|E(P_{u,w})| = |V (P_{u,w})| -1,
\end{equation}
\begin{equation}\label{pr2.4_preq9}
|E(P_{w,v})| = |V (P_{w,v})|-1,
\end{equation}
and
\begin{equation}\label{pr2.4_preq10}
|E(P^1_{u,v})| = |V (P^1_{u,v})|-1.
\end{equation}

Now using \eqref{pr2.4_preq2}--\eqref{pr2.4_preq3}, \eqref{pr2.4_preq4}, and \eqref{pr2.4_preq8}--\eqref{pr2.4_preq9} we obtain
\begin{equation}\label{pr2.4_preq11}
\begin{aligned}
d_G (u,w) + d_G (w,v) &= |V(P_{u,w})| + |V(P_{w,v})|-2 \\
& = |V(G_1)| + |V(P_{u,w}) \cap V(P_{w,v})|-2.
\end{aligned}
\end{equation}
Since $w$ belongs to $V(P_{u,w}) \cap V(P_{w,v})$, \eqref{pr2.4_preq11} implies
\begin{equation}\label{pr2.4_preq12}
d_G(u,w) + d_G(w,v) \geqslant |V (G_1)|-1.
\end{equation}
Similarly it follows from $P^1_{u,v} \subseteq G_1$ and \eqref{pr2.4_preq10} that
\begin{equation}\label{pr2.4_preq13}
d_{G_1} (u,v) = |V(P^1_{u,v})| -1 \leqslant |V(G_1)|-1.
\end{equation}
Inequality \eqref{pr2.4_preq1} follows from \eqref{pr2.4_preq5}, \eqref{pr2.4_preq12} and \eqref{pr2.4_preq13}.

The proof is completed.
\end{proof}

The following lemmas will be used in the next section of the paper.

\begin{lemma}\label{lem2.6}
Let $G$ be a connected graph and let $u,v \in V(G)$. Then the vertices $u$ and $v$ are adjacent if and only if the equality
\begin{equation}\label{lem2.6_eq1}
d_G (u,v) = 1
\end{equation}
holds.
\end{lemma}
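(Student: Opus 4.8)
The plan is to derive both implications directly from Definition~\ref{def2.5}, using only the description of a path given in~\eqref{s2_eq0}.

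For the ``only if'' direction I would start from the hypothesis $\{u,v\} \in E(G)$, which in particular gives $u \neq v$ and hence $d_G(u,v) \neq 0$. The two-term sequence $(u,v)$ is itself a path joining $u$ and $v$, so the family of paths joining $u$ and $v$ is nonempty and, by Definition~\ref{def2.5}, $d_G(u,v) = |E(P)|$ for some path $P$ joining $u$ and $v$ of minimal order; comparing $P$ with $(u,v)$ yields $|E(P)| \leqslant |E((u,v))| = 1$. Since, by~\eqref{s2_eq0}, every path has $k \geqslant 1$ and hence at least one edge, we also get $|E(P)| \geqslant 1$. Therefore $d_G(u,v) = 1$.

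For the ``if'' direction I would assume $d_G(u,v) = 1$. Then $u \neq v$ and, by Definition~\ref{def2.5}, there is a path $P = (x_0, \ldots, x_k)$ joining $u$ and $v$ with $|E(P)| = 1$. In view of~\eqref{s2_eq0} this forces $k = 1$, so $P = (x_0, x_1)$, $V(P) = \{x_0, x_1\}$ and $E(P) = \{\{x_0, x_1\}\}$. Since $P$ joins $u$ and $v$, we obtain $\{u,v\} = \{x_0, x_1\} \in E(P) \subseteq E(G)$, that is, $u$ and $v$ are adjacent.

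There is no substantial obstacle here; the only points that must not be overlooked are that a path always carries at least one edge (so that an adjacent pair cannot have geodesic distance $0$) and that a one-edge path has exactly its two endpoints as its vertex set (so that geodesic distance $1$ forces adjacency).
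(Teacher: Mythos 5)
Your argument is correct and follows the same route as the paper: adjacency is equated with the existence of a one-edge path joining $u$ and $v$, and Definition~\ref{def2.5} then translates this into $d_G(u,v)=1$. You merely spell out the two implications more explicitly than the paper's proof does.
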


\begin{proof}
It follows directly from the definition of paths that $u$ and $v$ are adjacent iff there is a path $P \subseteq G$ of the size $1$ joining $u$ and $v$ in $G$. Now using Definition~\ref{def2.5} we see that the existence of such path $P$ is equivalent to the validity of equality~\eqref{lem2.6_eq1}.
\end{proof}

\begin{lemma}\label{lem2.7}
Let $G$ be a connected graph, let $x$ and $z$ be different vertices of $G$, and let $P$ be a path of the minimal order joining $x$ and $z$. Then every $y \in V(P)$ lies between $x$ and $z$, whenever $x \neq y \neq z$.
\end{lemma}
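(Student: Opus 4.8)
The plan is to let $P=(x_0,x_1,\ldots,x_k)$ with $x_0=x$ and $x_k=z$ be the given path of minimal order, so that $d_G(x,z)=k$, and to fix an arbitrary $y\in V(P)$ with $y\neq x$ and $y\neq z$; say $y=x_j$ with $1\leqslant j\leqslant k-1$. I must show
$$
d_G(x,z)=d_G(x,y)+d_G(y,z).
$$
The ``$\leqslant$'' direction is just the triangle inequality from Proposition~\ref{pr2.4}, so the real content is the reverse inequality $d_G(x,y)+d_G(y,z)\leqslant d_G(x,z)=k$.

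For that, first I would observe that the subpath $P_1:=(x_0,\ldots,x_j)$ is a path in $G$ joining $x$ and $y$, and $P_2:=(x_j,\ldots,x_k)$ is a path in $G$ joining $y$ and $z$; these are genuine subgraphs of $G$, so by Definition~\ref{def2.5} we get $d_G(x,y)\leqslant |E(P_1)|=j$ and $d_G(y,z)\leqslant |E(P_2)|=k-j$. Adding these gives $d_G(x,y)+d_G(y,z)\leqslant j+(k-j)=k=d_G(x,z)$, which is exactly the reverse inequality. Combining with the triangle inequality yields the claimed equality, and since $P$ has at least one edge and $y\neq x\neq z$ forces $x,z$ distinct, the hypotheses of Definition~\ref{def2.3} are met, so $y$ indeed lies between $x$ and $z$.

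There is essentially no serious obstacle here; the only point needing a word of care is the verification that the subpaths $P_1$ and $P_2$ really are paths in the sense of \eqref{s2_eq0} — i.e.\ that their vertices are pairwise distinct — but this is immediate because the vertices of $P$ are already pairwise distinct (a path of minimal order joining two vertices cannot repeat a vertex, as deleting a cycle would produce a shorter one), hence so are the vertices of any consecutive stretch of them. Thus the main ``difficulty'' is purely bookkeeping: being careful that $1\leqslant j\leqslant k-1$ so that both $P_1$ and $P_2$ are nonempty graphs and hence legitimate paths, which is guaranteed by the assumption $x\neq y\neq z$. I would also note explicitly that $d_G(x,y)=j$ and $d_G(y,z)=k-j$ in fact hold with equality, since a strictly shorter path from $x$ to $y$ (or $y$ to $z$) could be concatenated to beat $d_G(x,z)=k$; but for the betweenness statement the inequalities suffice.
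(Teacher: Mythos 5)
Your proof is correct and follows essentially the same route as the paper: both arguments split the minimal path $P$ at $y$, use the two subpaths to bound $d_G(x,y)+d_G(y,z)$ above by $|E(P)|=d_G(x,z)$, and then sandwich with the triangle inequality (the paper phrases the subpath bounds via the geodesic distance $d_P$ on $V(P)$, i.e.\ $d_P(x,y)\geqslant d_G(x,y)$, $d_P(y,z)\geqslant d_G(y,z)$, $d_P(x,z)=d_G(x,z)$, which is exactly your $j$ and $k-j$ bookkeeping). No gaps worth noting.
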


\begin{proof}
Let $y \in V(P)$ and $x \neq y \neq z$ hold. We must prove that
\begin{equation}\label{lem2.7_eq1}
d_G (x,z) = d_G (x, y) + d_G(y,z).
\end{equation}
Let us consider the geodesic distance $d_P$ on the vertex set $V(P)$. Then the definition of this distance and the definition of the paths give us the equality
\begin{equation}\label{lem2.7_preq1}
d_P (x,z) = d_P (x,y) + d_P(y,z)
\end{equation}
for every $y \in V(P)$.

Definition~\ref{def2.5} implies the inequalities
\begin{equation}\label{lem2.7_preq2}
d_P (x,y) \geqslant d_G(x,y) \quad \textrm{and} \quad d_P(y,z) \geqslant d_G(y,z).
\end{equation}
Moreover, it follows from the definition of the path $P$ and Definition~\ref{def2.5} that
\begin{equation}\label{lem2.7_preq3}
d_P (x,z) = d_G(x,z).
\end{equation}
Using the triangle inequality, \eqref{lem2.7_preq1}, \eqref{lem2.7_preq2} and \eqref{lem2.7_preq3} we obtain
$$
d_G(x,z) \leqslant d_G(x,y) + d_G (y,z) \leqslant d_P(x,y) + d_P (y,z) = d_P(x,z) = d_G (x,z).
$$
Equality \eqref{lem2.7_eq1} follows.
\end{proof}

\section{Metric betweennes and geodesic distance on graphs}

The following theorem shows that the Kay--Chartrand characterization of the spaces $(V(G), d_G)$ is valid for connected graphs $G$ of arbitrary order.

In what follows we denote by $\mathbb{N}_0$ the set of all non-negative integers, $\mathbb{N}_0 = \{0, 1, 2, \ldots\}$.

\begin{theorem}\label{th3.6}
Let $(X, d)$ be a metric space.
Then the following statements are equivalent.

\begin{itemize}
\item [$(i)$] There exists a connected graph $G$ such that the metric spaces $(X, d)$ and $(V(G), d_G)$ are isometric.

\item [$(ii)$] The inclusion
\begin{equation}\label{th3.6_eq2}
\{d(p,q) : p,q \in X\} \subseteq\mathbb{N}_0
\end{equation}
holds and, for any two points $x,z \in X$ satisfying the inequality $d(x,z) \geqslant 2 $, there is $y \in X$ such that
\begin{equation}\label{th3.6_eq4}
x \neq y \neq z
\end{equation}
and
\begin{equation}\label{th3.6_eq5}
d(x,z) = d(x,y) + d(y,z).
\end{equation}
\end{itemize}
\end{theorem}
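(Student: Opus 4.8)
The implication $(i) \Rightarrow (ii)$ is the easy direction and I would dispatch it first. If $\Phi$ is an isometry of $(X,d)$ onto $(V(G), d_G)$, then \eqref{th3.6_eq2} is immediate from Definition~\ref{def2.5}, since $d_G$ takes only values in $\mathbb{N}_0$. For the betweenness condition, take $x, z \in X$ with $d(x,z) \geqslant 2$. Then $\Phi(x) \neq \Phi(z)$, so there is a path $P$ of minimal order in $G$ joining $\Phi(x)$ and $\Phi(z)$; since $|E(P)| = d_G(\Phi(x), \Phi(z)) = d(x,z) \geqslant 2$, the path $P$ has at least one internal vertex $w$ with $\Phi(x) \neq w \neq \Phi(z)$. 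By Lemma~\ref{lem2.7}, $w$ lies between $\Phi(x)$ and $\Phi(z)$ in $(V(G), d_G)$, and transporting back through $\Phi^{-1}$ gives a point $y = \Phi^{-1}(w)$ satisfying \eqref{th3.6_eq4} and \eqref{th3.6_eq5}.

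The substantive direction is $(ii) \Rightarrow (i)$. The natural construction is to build the graph $G$ directly on the vertex set $V(G) = X$ by declaring $\{p, q\} \in E(G)$ exactly when $d(p,q) = 1$. Then one must prove two things: that $G$ is connected, and that $d_G = d$ on $X \times X$. For connectedness and for the inequality $d_G(p,q) \leqslant d(p,q)$, the plan is induction on the integer $n = d(p,q)$: the cases $n = 0, 1$ are clear from the definition of $E(G)$ and Lemma~\ref{lem2.6}, and for $n \geqslant 2$ the betweenness hypothesis yields $y$ with $d(p,y) + d(y,q) = n$ and $y \notin \{p,q\}$, hence $d(p,y)$ and $d(y,q)$ are strictly smaller positive integers; the inductive hypothesis gives paths realizing these in $G$, and concatenating them produces a walk, hence a path, from $p$ to $q$ in $G$ of size at most $n$, so $d_G(p,q) \leqslant d(p,q)$. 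In particular $G$ is connected.

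For the reverse inequality $d_G(p,q) \geqslant d(p,q)$, I would argue that any path $P = (x_0, x_1, \dots, x_k)$ in $G$ joining $p = x_0$ and $q = x_k$ satisfies $d(p,q) \leqslant k$: each edge $\{x_{i-1}, x_i\}$ of $G$ means $d(x_{i-1}, x_i) = 1$, so the triangle inequality in $(X,d)$ gives $d(p,q) \leqslant \sum_{i=1}^k d(x_{i-1}, x_i) = k$. Taking $P$ of minimal order yields $d(p,q) \leqslant d_G(p,q)$. Combining the two inequalities gives $d_G = d$, so the identity map is an isometry of $(X,d)$ onto $(V(G), d_G)$, and $(i)$ holds.

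The main obstacle is nothing deep but rather a bookkeeping point that matters precisely because $X$ may be infinite: in the inductive step one concatenates a path from $p$ to $y$ with a path from $y$ to $q$, and the resulting vertex sequence is only a \emph{walk}, since the two pieces may share vertices other than $y$. One must invoke the standard fact (already used in the proof of Lemma~\ref{lem2.4}) that a walk between two distinct vertices contains a path between them; this keeps the argument valid without any finiteness assumption. A second minor point is to make sure the induction is genuinely on a non-negative integer — this is exactly where hypothesis \eqref{th3.6_eq2} is used — so that "strictly smaller positive integer" is meaningful and the recursion terminates.
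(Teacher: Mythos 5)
Your proposal is correct and follows essentially the same route as the paper: the same transport of betweenness through the isometry for $(i)\Rightarrow(ii)$, and for $(ii)\Rightarrow(i)$ the same unit-distance graph on $X$ with induction on $d(p,q)$ via the betweenness hypothesis and the triangle inequality for the two comparisons between $d$ and $d_G$. The only (harmless) difference is organizational: you fold connectivity and the bound $d_G\leqslant d$ into a single quantitative induction using the walk-contains-a-path fact, whereas the paper first proves connectivity via Lemma~\ref{lem2.4} and then obtains $d\geqslant d_G$ by extracting a chain of unit-distance points, so your version actually makes explicit a step the paper leaves somewhat sketchy.
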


\begin{proof}
$(i) \Rightarrow (ii)$. Let \(G\) be a connected graph such that \((X, d)\) and \((V(G), d_G)\) are isometric. Then inclusion~\eqref{th3.6_eq2} follows from Definitions~\ref{def2.1} and \ref{def2.5}.

Let us consider arbitrary $x,z \in X$ such that $d(x,z) \geqslant 2$. We need to show that there exists a point \(y \in X\) such that \eqref{th3.6_eq4} and \eqref{th3.6_eq5} are satisfied. Let $\Phi\colon X \to V(G)$ be an isometry of the metric spaces $(X,d)$ and $(V(G), d_G)$. Then the inequality
\begin{equation}\label{th3.3_preq1}
d_G (\Phi(x), \Phi(z)) \geqslant 2
\end{equation}
holds by Definition~\ref{def2.1}. Let $P$ be a path in $G$ joining $\Phi(x)$ and $\Phi(z)$ such that
$$
d_G (\Phi(x), \Phi(z)) = |E(P)|.
$$ Inequality \eqref{th3.3_preq1} implies $|E(P)| \geqslant 2$. Consequently there is $w \in V(P)$ such that
\begin{equation}\label{th3.3_preq2}
\Phi(x) \neq w \neq \Phi(z).
\end{equation}
Now using Lemma~\ref{lem2.7} we obtain
\begin{equation}\label{th3.3_preq3}
d_G (\Phi(x), \Phi(z)) = d_G (\Phi(x), w) + d_G (w, \Phi(z)).
\end{equation}
Let $\Phi^{-1}: V(G) \to X$ be the inverse of the mapping $\Phi: X \to V(G)$. Then $\Phi$ and $\Phi^{-1}$ are isometries and, consequently, we have
\begin{equation}\label{th3.3_preq4}
d(x, z) = d_G (\Phi(x), \Phi(z))
\end{equation}
and
\begin{equation}\label{th3.3_preq5}
d_G (\Phi(x), w) = d(\Phi^{-1} \left( \Phi(x)\right), \Phi^{-1}(w)) = d(x, \Phi^{-1} (w)),
\end{equation}
and
\begin{equation}\label{th3.3_preq6}
d_G (w, \Phi(z)) = d(\Phi^{-1} (w), \Phi^{-1}(\Phi(z))) = d(\Phi^{-1} (w), z).
\end{equation}
Equalities \eqref{th3.3_preq3}--\eqref{th3.3_preq6} imply
\begin{equation}\label{th3.3_preq7}
d(x, z) = d(x, \Phi^{-1}(w)) + d(\Phi^{-1} (w), z),
\end{equation}
and, in addition we have
\begin{equation}\label{th3.3_preq8}
x \neq \Phi^{-1}(w) \neq z
\end{equation}
by \eqref{th3.3_preq2}. Now \eqref{th3.6_eq4} and \eqref{th3.6_eq5} follows from \eqref{th3.3_preq8} and \eqref{th3.3_preq7} with $y = \Phi^{-1} (w)$.

$(ii) \Rightarrow (i)$. Let statement $(ii)$ be valid. Then we consider a graph $G$ such that
\begin{equation}\label{th3.6_preq0}
V(G) = X
\end{equation}
and
\begin{equation}\label{th3.6_preq1}
\left(\{p, q\} \in E(G) \right) \Longleftrightarrow (d(p, q)=1)
\end{equation}
for all $p, q \in X$.

We claim that $G$ is a connected graph and that the equality
\begin{equation}\label{th3.6_preq2}
d=d_G
\end{equation}
is satisfied, which obviously implies \((i)\).

It follows directly from the definition of the connectivity that $G$ is connected if for any different $x,z \in V(G)$ there is a connected subgraph \(G_{x,z}\) of \(G\) such that $x \in V(G_{x,z})$ and $z \in V(G_{x,z})$. Let us consider arbitrary different $x,z \in V(G)$. Write
\begin{equation}\label{th3.6_preq3}
n:=d(x,z).
\end{equation}
It follows from~\eqref{th3.6_eq2} and \eqref{th3.6_preq0} that $n$ is a positive integer. We construct the required $G_{x,z} \subseteq G$ by induction on $n$.

If $n=1$, then $x$ and $z$ adjacent in $G$ by~\eqref{th3.6_preq1}. So we can define $G_{x,z}$ by
$$
V(G_{x,z}) := \{x,z\} \quad \textrm{and} \quad E(G_{x,z}):= \{\{x,z\}\}.
$$
Suppose that we can construct $G_{x, z}$ for all $n<n_1$, where $n_1 \geqslant 2$ and that
$$
d(x,z)=n_1
$$
holds. Then using $(ii)$ we can find $y \in V(G)$ satisfying \eqref{th3.6_eq4} and \eqref{th3.6_eq5}.

Now \eqref{th3.6_eq4} and \eqref{th3.6_eq5} give us
$$
1 \leqslant d(x,y) \leqslant n_1-1,
$$
and
$$
1 \leqslant d(y,z) \leqslant n_1-1.
$$
Consequently, by induction hypothesis, there are connected $G_{x,y} \subseteq G$ and $G_{y,z} \subseteq G $ such that $x,y \in V(G_{x,y})$ and $y, z \in V(G_{y, z})$. Since $y \in V(G_{x,y}) \cap V(G_{y,z})$, the union $G_{x,y} \cup G_{y,z}$ is a connected subgraph of $G$ by Lemma~\ref{lem2.4}.

Write
$$
G_{x,z}:= G_{x,y} \cup G_{y,z} \,,
$$
then $G_{x,z}$ is connected and $x,z \in G_{x,z}$.

Thus $G$ is a connected graph and, consequently the geodesic distance $d_G$ is a correctly defined metric on $V(G)$ by Proposition~\ref{pr2.4}.

To complete the proof we must show that \eqref{th3.6_preq2} holds. Let $x$ and $z$ be different points of $X$. Assume first that $d(x,z) =1$. Then $x$ and $y$ are adjacent in $G$. Hence, by Lemma~\ref{lem2.6}, we obtain
$$
d_G(x,z) =1.
$$
Thus, the equality
\begin{equation}\label{th3.6_preq6}
d(x,z) = d_G(x,z)
\end{equation}
holds if $d(x,z) =1$. Moreover, equality~\eqref{th3.6_preq6} holds by Definition~\ref{def2.0} when $x=z$.

Suppose now that
\begin{equation}\label{th3.6_preq7}
d(x,z) = n \geqslant 2.
\end{equation}
Then using statement $(ii)$ we can find some points $y_1, \ldots, y_{n+1} \in X$ such that
\begin{equation*}\label{th3.6_preq8}
y_1 = x_1, \quad y_{n+1} = z,
\end{equation*}
and
\begin{equation}\label{th3.6_preq9}
d(y_i, y_{i+1}) = 1
\end{equation}
for every $i \in \{ i, \ldots, n\}$, and
\begin{equation}\label{th3.6_preq10}
d(x,z) = \sum_{i=1}^n d(y_i, y_{i+1}).
\end{equation}
It was noted above that \eqref{th3.6_preq9} impllies
$$
d(y_i, y_{i+1}) =d_G (y_i, y_{i+1}), \quad i= 1, \ldots, n.
$$
Consequently we can rewrite \eqref{th3.6_preq10} in the form
$$
d(x,z) = d_G(x,y_1) + d_G(y_1, y_2) + \ldots + d_G(y_{n-1}, y_n) + d_G(y_n,z).
$$
The last equality and the triangle inequality give us the inequality
\begin{equation}\label{th3.6_preq11}
d(x,z) \geqslant d_G(x,z).
\end{equation}

Let $P$ be the path of the minimal order joining $x$ and $z$ in $G$,
\begin{equation}\label{th3.6_preq12}
V(P) = \{ x_0, x_1, \ldots, x_k\}, \ k \leqslant 1, \ \textrm{and} \ E(P ) = \{ \{ x_0, x_1\}, \ldots, \{x_{k-1}, x_k\} \},
\end{equation}
where $x_0=x$ and $x_n =z$. It follows from \eqref{th3.6_preq7} that
$$
d_G (x,z) \geqslant 2,
$$
because in opposite case $d_G (x,z) = d(x,z) =1$ contrary to \eqref{th3.6_preq7}. Thus $k \geqslant 2$ holds.

Using Definition~\ref{def2.5} we obtain the equality
$$
d_G(x,z) = d_G(x,x_1) + d_G(x_1, x_2) + \ldots + d_G (x_{k-2}, x_{k-1}) + d_G (x_{k-1}, z).
$$
Now we can rewrite it as
\begin{equation}\label{th3.6_preq13}
d_G(x,z) = d(x,x_1) + d(x_1, x_2) + \ldots + d(x_{k-2}, x_{k-1}) + d (x_{k-1}, z)
\end{equation}
because, for every $j \in \{ 0, \ldots, k-1 \}$, the vertices $x_j$ and $x_{j+1}$ are adjecent that implies $d_G (x_j, x_{j+1}) = d (x_j, x_{j+1})$. Equality \eqref{th3.6_preq13} and the triangle inequality imply
$$
d_G (x,z) \geqslant d(x,z).
$$
The last inequality and \eqref{th3.6_preq11} give us
$$
d(x,z) = d_G (x,z).
$$
Equality~\eqref{th3.6_preq2} follows.

The proof is completed.
\end{proof}

Analyzing the above proof we obtain the following clarification of Theorem~\ref{th3.6}.

\begin{theorem}\label{th3.11}
Let $(X,d)$ be a metric space and let
$$
\{d(p,q)\colon p,q \in X\} \subseteq \mathbb{N}_0.
$$
If, for any $x,z \in X$ satisfying $d(x,z) \geqslant 2$, there is $y \in X$ such that $y$ lies between $x$ and $z$, then the graph $G$ defined by $V(G) =X$ and
$$
\left( \{x,y\} \in E(G) \right) \Longleftrightarrow (d(x,y) = 1),
$$
is connected and the geodesic distance $d_G$ satisfies the equality
$d = d_G.$
\end{theorem}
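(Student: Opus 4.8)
The plan is to observe that Theorem~\ref{th3.11} records exactly the constructive content already contained in the proof of the implication $(ii) \Rightarrow (i)$ of Theorem~\ref{th3.6}, so the proof amounts to re-reading that argument while paying attention to which graph was built.

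First I would note that the hypotheses imposed on $(X,d)$ in Theorem~\ref{th3.11} coincide with statement $(ii)$ of Theorem~\ref{th3.6}: integrality of all pairwise distances, together with the existence, for every pair $x,z$ with $d(x,z)\geqslant 2$, of a point $y$ satisfying $x \neq y \neq z$ and $d(x,z) = d(x,y) + d(y,z)$, that is, a point lying between $x$ and $z$ in the sense of Definition~\ref{def2.3}. Second, I would point out that in the proof of $(ii) \Rightarrow (i)$ the graph $G$ was not chosen arbitrarily: it was defined by \eqref{th3.6_preq0} and \eqref{th3.6_preq1}, i.e. by $V(G) = X$ and $\{p,q\} \in E(G) \Leftrightarrow d(p,q) = 1$, which is precisely the graph appearing in the statement of Theorem~\ref{th3.11}. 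For this particular $G$ the cited proof already establishes both required conclusions: (a) $G$ is connected, since for arbitrary distinct $x,z$ one constructs a connected subgraph $G_{x,z} \subseteq G$ containing $x$ and $z$ by induction on $n := d(x,z)$, the base case $n=1$ using \eqref{th3.6_preq1}, the inductive step splitting at a between-point $y$ and gluing $G_{x,y} \cup G_{y,z}$ via Lemma~\ref{lem2.4}; and (b) $d = d_G$, since the proof derives $d(x,z) \geqslant d_G(x,z)$ from a chain of unit steps supplied by iterated betweenness together with the triangle inequality, and $d_G(x,z) \geqslant d(x,z)$ by taking a minimal path $P$ in $G$, observing that consecutive vertices of $P$ are adjacent hence at $d$-distance $1$, and applying the triangle inequality along $P$.

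Consequently, the proof of Theorem~\ref{th3.11} is simply to invoke the proof of $(ii) \Rightarrow (i)$ of Theorem~\ref{th3.6} applied to this $G$, whose output is exactly connectedness of $G$ and the equality $d = d_G$. I do not anticipate any genuine obstacle: the only point requiring care is the bookkeeping remark that the construction in Theorem~\ref{th3.6} is canonical --- the edge set is forced by \eqref{th3.6_preq1} rather than merely chosen --- which is what legitimizes upgrading the existence assertion of Theorem~\ref{th3.6} to the explicit description of $G$ given in Theorem~\ref{th3.11}.
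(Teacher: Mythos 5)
Your proposal is correct and matches the paper exactly: the paper gives no separate argument for Theorem~\ref{th3.11}, stating only that it is obtained by ``analyzing the above proof,'' i.e.\ by noting that the $(ii) \Rightarrow (i)$ part of Theorem~\ref{th3.6} constructs precisely the graph with $V(G)=X$ and edges at $d$-distance $1$ and proves its connectedness and $d=d_G$. Your reading of that construction, including the induction on $d(x,z)$ via Lemma~\ref{lem2.4} and the two-sided comparison of $d$ and $d_G$, is exactly the intended justification.
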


The following theorem shows that metric spaces with integer distances between points are subspaces of the spaces $(V(G), d_G)$.

\begin{theorem}\label{th3.5}
Let $(X,d)$ be a metric space. Then the following statements are equivalent.
\begin{itemize}
\item [$(i)$] There is a connected graph $G$ such that $(X,d)$ is isometric to a subspace of $(V(G), d_G)$.

\item [$(ii)$] The inclusion
\begin{equation}\label{th3.5_eq1}
\{d(p,q)\colon p,q \in X\} \subseteq\mathbb{N}_0
\end{equation}
holds.
\end{itemize}
\end{theorem}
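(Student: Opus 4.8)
The implication $(i)\Rightarrow(ii)$ is immediate: if $\Psi$ is an isometric embedding of $(X,d)$ into $(V(G),d_G)$, then $d(p,q)=d_G(\Psi(p),\Psi(q))\in\mathbb{N}_0$ by Definition~\ref{def2.5}, so \eqref{th3.5_eq1} holds. For $(ii)\Rightarrow(i)$ the plan is to build $G$ by ``filling in'' every pair of points of $X$ with a fresh path of the correct length. If $|X|\leqslant 1$ we may take $G$ to be a one-vertex graph, so assume $|X|\geqslant 2$. For each two-element subset $\{p,q\}$ of $X$ put $n_{p,q}:=d(p,q)$; this is an integer and $n_{p,q}\geqslant 1$ by \eqref{th3.5_eq1} and Definition~\ref{def2.0}. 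Introduce a path
$$
Q_{p,q}=\bigl(p,\,w^{1}_{p,q},\ldots,w^{\,n_{p,q}-1}_{p,q},\,q\bigr)
$$
of length $n_{p,q}$ joining $p$ and $q$, where the internal vertices $w^{i}_{p,q}$ are chosen pairwise distinct over all pairs and disjoint from $X$ (when $n_{p,q}=1$ this $Q_{p,q}$ is just the edge $\{p,q\}$). Let $G$ be the union of all the $Q_{p,q}$, so that $V(G)=X\cup\bigcup_{\{p,q\}}\{w^{1}_{p,q},\ldots,w^{\,n_{p,q}-1}_{p,q}\}$ and $E(G)=\bigcup_{\{p,q\}}E(Q_{p,q})$. (The order of $G$ may be arbitrarily large, which is allowed.)

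Next I would check that $G$ is connected, and hence, by Proposition~\ref{pr2.4}, that $d_G$ is a metric on $V(G)$. Each $Q_{p,q}$ is a connected subgraph; any two vertices of $X$ lie on a common $Q_{p,q}$, each internal vertex $w^{i}_{p,q}$ lies on $Q_{p,q}$ together with $p,q\in X$, and repeated application of Lemma~\ref{lem2.4} (or a direct walk argument) joins everything. The key structural observation, to be recorded here, is that every internal vertex $w^{i}_{p,q}$ has degree exactly $2$ in $G$ and both of its neighbours again belong to $Q_{p,q}$; consequently, no path of $G$ can ``enter'' a gadget $Q_{p,q}$ except at $p$ or at $q$, and once it has entered at one endpoint the only $X$-vertex it can next reach inside that gadget is the other endpoint.

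The main step is to show that the inclusion $X\hookrightarrow V(G)$ is an isometric embedding, i.e.\ $d_G(x,z)=d(x,z)$ for all $x,z\in X$. The inequality $d_G(x,z)\leqslant d(x,z)$ is clear because $Q_{x,z}\subseteq G$ is a path of length $d(x,z)$ joining $x$ and $z$. For the reverse inequality let $P$ be a path of minimal order in $G$ joining $x$ and $z$, and let $x=a_0,a_1,\ldots,a_m=z$ be the vertices of $X$ lying on $P$, listed in the order in which they occur along $P$ (so $m\geqslant 1$, and the $a_j$ are pairwise distinct since $P$ is a path). By the degree-$2$ observation, the portion of $P$ between consecutive $a_j$ and $a_{j+1}$ either is the single edge $\{a_j,a_{j+1}\}$ (with $d(a_j,a_{j+1})=1$), or passes only through internal vertices of the single gadget $Q_{a_j,a_{j+1}}$; since in a path graph the unique simple path between its two endpoints has length equal to the length of that path graph, this portion has exactly $d(a_j,a_{j+1})$ edges in either case. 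Summing over $j$ and using the triangle inequality for $d$,
$$
|E(P)|=\sum_{j=0}^{m-1}d(a_j,a_{j+1})\geqslant d(a_0,a_m)=d(x,z),
$$
so $d_G(x,z)\geqslant d(x,z)$. Hence $d_G(x,z)=d(x,z)$, and $(i)$ follows with $G$ as constructed.

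The only genuinely delicate point is the one highlighted above: the gadgets $Q_{p,q}$ must be pairwise internally disjoint, so that their internal vertices have degree $2$ and cannot act as ``short-cuts'' between unrelated pairs of points of $X$; granting this, the rest is the routine decomposition of a shortest path into gadget-traversals. I would also remark that the construction is essentially the one underlying Theorem~\ref{th3.11}: adjoining the internal vertices $w^{i}_{p,q}$ to $X$, together with their $d_G$-distances, yields an integer-valued metric space with the betweenness property into which $(X,d)$ is isometrically embedded, so one could alternatively deduce the statement from Theorem~\ref{th3.6}.
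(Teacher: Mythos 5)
Your proof is correct, but it follows a noticeably different route than the paper's. You attach a fresh path $Q_{p,q}$ of length $d(p,q)$ to \emph{every} pair of points of $X$, whereas the paper first singles out the family $X_2$ of pairs $\{x,y\}$ with $d(x,y)\geqslant 2$ that have \emph{no} point lying between them, adds subdividing paths only for those pairs, keeps plain edges for distance-one pairs, and falls back on Theorem~\ref{th3.6} when $X_2=\varnothing$. The verifications then diverge accordingly: for you the inequality $d_G\leqslant d$ is immediate (each pair has its own gadget), and the reverse inequality comes from a direct structural decomposition of a shortest path into gadget traversals, justified by the observation that internal gadget vertices have degree $2$, followed by the triangle inequality in $(X,d)$; the paper instead proves $d\geqslant d_G$ by decomposing $d(u,v)$ along a chain of pairs that are either at distance one or in $X_2$, and proves $d\leqslant d_G$ by a minimal-counterexample argument (a pair $u^*,v^*$ violating the inequality with $d_G(u^*,v^*)$ minimal), split into cases according to whether the shortest path contains an interior vertex of $X$, using Lemma~\ref{lem2.7} and the disjointness conditions on the added paths. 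Your uniform construction buys a shorter, self-contained argument and avoids the chain-decomposition step (which in the paper implicitly needs an induction on the integer distance); the paper's construction buys a leaner graph --- no new vertices are added where the betweenness condition already holds, so it coincides with the graph of Theorems~\ref{th3.6} and \ref{th3.11} in that case --- and reuses that machinery. One caveat: your closing remark that the statement could ``alternatively'' be deduced from Theorem~\ref{th3.6} by adjoining the internal vertices with their $d_G$-distances is, as phrased, circular, since knowing those distances restrict to $d$ on $X$ is exactly what is being proved; it is harmless because it is not part of your actual argument, but it should not be presented as an independent derivation.
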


\begin{proof}
$(i) \Longrightarrow (ii)$. Let $(i)$ hold. Then $(ii)$ directly follows from Definition~\ref{def2.5}.

$(ii) \Longrightarrow (i)$. Suppose that inclusion \eqref{th3.5_eq1} holds. Let us define a set $X_2$ of two-point subsets of $X $ by the rule: $\{ x,y\} \in X_2$ iff
\begin{equation}\label{th3.5_preq1}
d(x,y) \geqslant 2
\end{equation}
and, for every $z \in X$,
\begin{equation}\label{th3.5_preq2}
d(x,y) < d(x,z) + d(z,y)
\end{equation}
whenever
\begin{equation}\label{th3.5_preq3}
x \neq y \neq z.
\end{equation}

If $X_2= \varnothing$, then $(i)$ follows from Theorem~\ref{th3.6}.

Let us consider the case when $ X_2 \neq  \varnothing$.
For each $\{ x,y\} \in X_2$ we define a path $P_{x,y} = (v_0, \ldots, v_k)$ such that:
\begin{itemize}
\item [$(i_1)$] $v_0=x$, $v_k =y$ and $v_i \notin X$ for any $i \in \{ 1, \ldots, k-1\}$;
\item [$(i_2)$] The equality $ k=d(x,y)$ holds;
\item [$(i_3)$] If $\{ x_1, y_1\} \in X_2$, $\{ x_2,y_2\} \in X_2$ and $\{x_1, y_1\} \neq \{x_2, y_2\}$, then the intersection of the sets
$$
\{u \in V(P_{x_1, y_1})\colon x_1 \neq u \neq y_1 \}
$$
and
$$
\{u \in V(P_{x_2, y_2})\colon x_2 \neq u \neq y_2 \}
$$
is empty.
\end{itemize}

Let us now define graph $G$ as follows:
\begin{equation}\label{th3.5_preq4}
V(G) = X \cup \left( \cup_{\{x,y\} \in X_2} V (P_{x,y}) \right)
\end{equation}
and, for all $u,v \in V(G)$, $ u$ and $v$ are adjacent iff either $u,v \in X$ and $d(u,v) =1 $, or $\{ u,v\} \in E (P_{x,y})$ for some $\{x,y\} \in X_2$.

We claim that $G$ is a connected graph.

\begin{figure}[htb]
\begin{tikzpicture}
\node at (-3, 3) {\( X= \{x_1, x_2, x_3\} \)};

\coordinate [label=below:$x_1$] (A) at (0,0);
\coordinate [label=above:$x_2$] (C) at (0,3);
\coordinate [label=below:$x_3$] (B) at (4,0);
\draw (A) -- node[below] {$4$} (B) -- node[above] {$5$} (C) -- node[left] {$3$} (A);

\draw [fill, black] (A) circle (1pt);
\draw [fill, black] (B) circle (1pt);
\draw [fill, black] (C) circle (1pt);

\node at (-3, -3) {\( G = C_{12} \)};

\coordinate [label=below:$x_1$] (A) at (0,-6);
\coordinate [label=above:$x_2$] (C) at (0,-3);
\coordinate [label=below:$x_3$] (B) at (4,-6);
\draw (A) -- node[below] { } (B) -- node[above] { } (C) -- node[left] { } (A);

\draw [fill, black] (A) circle (1pt);
\draw [fill, black] (B) circle (1pt);
\draw [fill, black] (C) circle (1pt);

\draw [fill, black] (0,-5) circle (1pt);
\draw [fill, black] (0,-4) circle (1pt);

\draw [fill, black] (1,-6) circle (1pt);
\draw [fill, black] (2,-6) circle (1pt);
\draw [fill, black] (3,-6) circle (1pt);

\draw [fill, black] (0,-5) circle (1pt);
\draw [fill, black] (0,-4) circle (1pt);
\draw [fill, black] (0,-3) circle (1pt);

\draw [fill, black] (0.8,-3.6) circle (1pt);
\draw [fill, black] (1.6,-4.2) circle (1pt);
\draw [fill, black] (2.4,-4.8) circle (1pt);
\draw [fill, black] (3.2,-5.4) circle (1pt);
\end{tikzpicture}
\caption{The Egyptian triangle \(\{x_1, x_2, x_3\}\) is isometrically embedded in the cycle $C_{12}$ endowed with the geodesic distance.}
\label{fig0}
\end{figure}
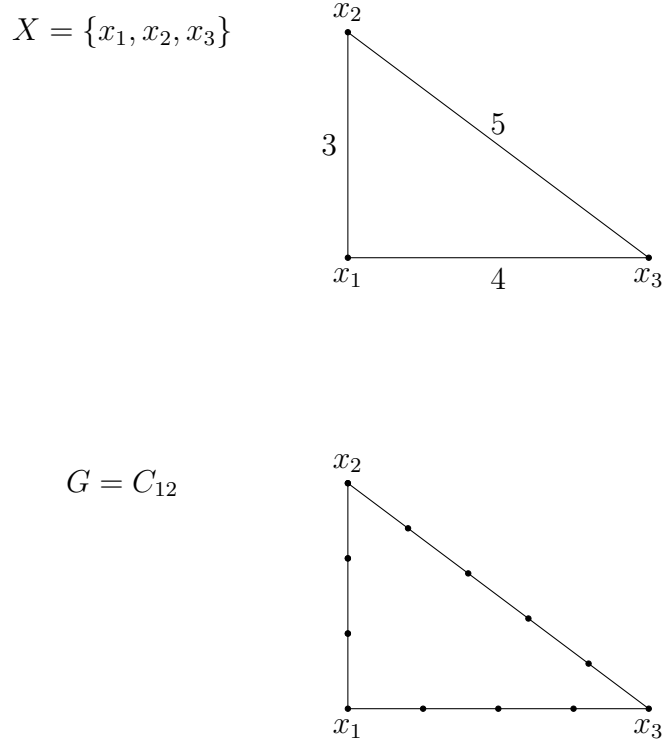

Let us consider two arbitrary distinct $u,v \in V(G)$. It is enough to show that there is a connected subgraph $G_{u,v}$ of $G$ such that $u \in V(G_{u,v})$ and $v \in V(G_{u,v})$ for the case when $u \in X$ and $v \in X$.

Indeed, if $ u \notin X$ and $v \notin X$, then, by \eqref{th3.5_preq4}, there are $\{ x_1, y_1\} \in X_2$ and $\{x_2, y_2\} \in X_2$ such that $u \in V (P_{x_1, y_1})$, and $v \in V( P_{x_2, y_2})$, and
$$
x_1, x_2, y_1, y_2 \in X.
$$
Without less of generality we may assume that $x_1 \neq y_2$. Suppose that there is a connected subgraph of $G_{x_1, y_2}$ such that
$$
x_1, y_2 \in V(G_{x_1, y_2}),
$$
then Lemma~\ref{lem2.4} implies that the union $P_{x_1, y_1} \cup P_{x_2, y_2} \cup G_{x_1, y_2}$ also is a connected subgraph of $G$. The cases $u \in X$, $v \notin X$ and $u \notin X$, $v \in X$ can be treated similarly, so we omit the details here.

Suppose now that $u,v \in X$ and $u \neq v$. If $d(u,v) =1 $, then, by definition, $u$ and $v$ are adjacent in $G$ and, consequently, we may take $G_{u,v}$ with
$$
V (G_{u,v}) = \{u,v\} \quad \textrm{and} \quad E(G_{u,v}) = \{ \{u,v\}\}.
$$
If
\begin{equation}\label{th3.5_preq5}
d(u,v) \geqslant 2
\end{equation}
holds and $\{u,v\} \in X_2$, then the path $P_{u,v}$ defined as in $(i_1) - (i_3)$ is a connected subgraph of $G$, and $u,v \in P_{u,v}$, so we can set $G_{u,v} := P_{u,v}$.

Let us consider now the case when \eqref{th3.5_preq5} is valid but $\{u,v\} \notin X_2$.
Then, by definition of $X_2$, there exist some points $p_0, p_1, \ldots, p_{n+1}$ such that
\begin{equation}\label{th3.5_preq6_0}
p_0=u \quad \textrm{and} \quad p_{n+1} =v,
\end{equation}
\begin{equation}\label{th3.5_preq6}
d(u,v) = \operatornamewithlimits{\sum}^{n}_{i=0} d(p_i, p_{i+1}),
\end{equation}
and, for each $i \in \{0, 1, \ldots, n+1\}$, we have
\begin{equation}\label{th3.5_preq7}
\textrm{either } \quad d(p_i, p_{i+1}) =1 \quad \textrm{or} \quad \{p_i, p_{i+1}\} \in X_2.
\end{equation}
The paths $P_{p_0,p_1}, P_{p_1, p_2}, \ldots, P_{p_n,p_{n+1}}$ are connected subgraphs of $G$. Consequently,
$$
G_{u,v} : = \operatornamewithlimits{\bigcup}_{i=0}^{n} P_{p_i, p_{i+1}}
$$
also is a connected subgraph of $G$ by Lemma~\ref{lem2.4}, and $u,v \in V(G_{u,v})$ holds by definition of $G_{u,v}$.

Thus $G$ is connected. To complete the proof we must show that the equality
\begin{equation}\label{th3.5_preq8}
d_G (u,v) = d(u,v)
\end{equation}
holds for all $u,v \in X$.

Equality~\eqref{th3.5_preq8} holds iff we have
\begin{equation}\label{th3.5_preq11}
d(u,v) \geqslant d_G (u,v)
\end{equation}
and
\begin{equation}\label{th3.5_preq12}
d(u,v) \leqslant d_G (u,v).
\end{equation}

Let us prove \eqref{th3.5_preq11}.

If $u=v$ holds, then inequality \eqref{th3.5_preq11} directly follows from Definition~\ref{def2.0}.

Let
\begin{equation}\label{th3.5_preq9}
d(u,v) = 1
\end{equation}
hold. Then, by definition of the graph $G$, we obtain
$$
\{u,v\} \in E(G).
$$
Hence, by Lemma~\ref{lem2.6} the equality
\begin{equation}\label{th3.5_preq10}
d_G (u,v) =1
\end{equation}
holds. Now \eqref{th3.5_preq11} follows from \eqref{th3.5_preq9} and \eqref{th3.5_preq10}.

Let us consider now the case when $u,v \in X$ and
$$
d(u,v) \geqslant 2
$$
holds.

Suppose that $\{u,v\} \in X_2$, and consider the path $P_{u,v}$ satisfying conditions $(i_1)-(i_3)$ with $x=u$ and $y=v$. Then we have
\begin{equation}\label{th3.5_preq13}
P_{u,v} \subseteq G
\end{equation}
by definition of $G$ and
\begin{equation}\label{th3.5_preq14}
d(u,v) = |E(P_{u,v})|
\end{equation}
by $(i_2)$. Definition~\ref{def2.5} and equality \eqref{th3.5_preq14} now imply \eqref{th3.5_preq11}.

If $\{u,v\} \notin X_2$, then there exist some points $p_0, \ldots, p_{n+1} \in X$ such that \eqref{th3.5_preq6_0}, \eqref{th3.5_preq6} and \eqref{th3.5_preq7} are valid. Using \eqref{th3.5_preq7} we obtain
\begin{equation}\label{th3.5_preq15}
d(u,p_1) \geqslant d_G (u,p), \ d(p_1, p_2) \geqslant d_G (p_1, p_2), \ \ldots, \ d(p_n, v) \geqslant d_G (p_n, v).
\end{equation}
Moreover, we have
\begin{equation}\label{th3.5_preq16}
d_G(u,v) \leqslant d_G (u,p_1) + d_G (p_1, p_2) + \ldots + d_G (p_n, v)
\end{equation}
by triangle inequality. Hence
$$
d(u,v) \geqslant d_G (u,p_1) + d_G (p_1, p_2) + \ldots + d_G (p_n, v) \geqslant d_G (u,v)
$$
holds by \eqref{th3.5_preq6}, \eqref{th3.5_preq15} and \eqref{th3.5_preq16}. Thus inequality \eqref{th3.5_preq11} is valid for all $u,v \in X$.

Let us prove \eqref{th3.5_preq12} for all $u,v \in X$. Reasoning as above we see that $d_G (u,v) = d(u,v)$ holds if $d_G(u,v) \leqslant 1$. Thus if there are $u,v \in X$ such that \eqref{th3.5_preq12} does not hold then there are $u^*, v^* \in X$ such that
\begin{equation}\label{th3.5_preq17}
d(u^*,v^*) > d_G (u^*,v^*)
\end{equation}
but we have
\begin{equation}\label{th3.5_preq18}
d(u,v) \leqslant d_G (u,v)
\end{equation}
whenever
\begin{equation}\label{th3.5_preq19}
d_G(u,v) < d_G (u^*,v^*).
\end{equation}
Let $u^*$ and $v^*$ satisfy the above conditions, and let $P^*_{u^*,v^*} = \{w_0, \ldots, w_m\}$ be a path in $G$ such that $w_0 = u^*, \ldots, w_m=v^*$ and
\begin{equation}\label{th3.5_preq20}
d_G(u^*,v^*) = |E(P^*_{u^*,v^*})|.
\end{equation}

The following two cases are possible:

We have
\begin{equation}\label{th3.5_preq21}
w_i \notin X
\end{equation}
whenever $i \in \{1, \ldots, m-1\}$;

There is $w_{i_0} \in V(P^*_{u^*, v^*})$ such that
\begin{equation}\label{th3.5_preq22}
w_{i_0} \in X
\end{equation}
and $i_0 \in \{1, \ldots, m-1\}$.

In the first case it follows from \eqref{th3.5_preq4} and \eqref{th3.5_preq21} that there is a path $P_{x,y}$ satisfying conditions $(i_1) -(i_3)$ such that
$$
w_1 \in V(P_{x,y}).
$$
Conditions $(i_1)$ and $(i_3)$ imply the equality
\begin{equation}\label{th3.5_preq24}
V(P_{x,y}) = V(P^*_{u^*,v^*}).
\end{equation}

Now using $(i_2)$ we see that \eqref{th3.5_preq22} implies
\begin{equation}\label{th3.5_preq25}
|E(P_{x,y})| = d(x,y) = d(u^*, v^*) = |E(P^*_{u^*, v^*})|.
\end{equation}
Equalities \eqref{th3.5_preq25} and \eqref{th3.5_preq20} give us
$$
d_G(u^*,v^*) =d (u^*, v^*)
$$
contrary to \eqref{th3.5_preq17}.

Let us consider the case when there is $w_{i_0} \in V(P^*_{u^*, v^*})$ such that \eqref{th3.5_preq22} holds and
\begin{equation}\label{th3.5_preq26}
i_0 \in \{ 1, \ldots, m-1\}.
\end{equation}
By Lemma~\ref{lem2.7} we have
\begin{equation}\label{th3.5_preq27}
d_G (u^*, v^*) = d_G (u^*, w_{i_0}) + d_G (w_{i_0}, v^*).
\end{equation}
Moreover \eqref{th3.5_preq26} implies that
\begin{equation}\label{th3.5_preq28}
u^* \neq w_{i_0} \neq v^*.
\end{equation}
Now it follows from \eqref{th3.5_preq27} and \eqref{th3.5_preq28} that
$$
d_G (u^*, w_{i_0}) < d_G (u^*, v^*)
$$
and
$$
d_G (w_{i_0}, v^*) < d_G (u^*, v^*).
$$
Hence, by definition of the points $u^*, v^*$, the equalities
\begin{equation}\label{th3.5_preq29}
d_G (u^*, w_{i_0}) = d (u^*, w_{i_0}),
\end{equation}
\begin{equation}\label{th3.5_preq30}
d_G (w_{i_0}, v^*) = d (w_{i_0}, v^*)
\end{equation}
hold. Now using \eqref{th3.5_preq27}, the triangle inequality in $(X,d)$, and equalities \eqref{th3.5_preq29}--\eqref{th3.5_preq30} we obtain
$$
d_G (u^*, v^*) = d (u^*, w_{i_0}) + d(w_{i_0}, v^*) \geqslant d (u^*, v^*),
$$
contrary to \eqref{th3.5_preq17}. Thus, \eqref{th3.5_preq12} holds for all $u,v \in X$.

The proof is completed.
\end{proof}

\begin{corollary}\label{cor3.6}
Let $(X,d)$ be a metric space. Then the following statements are equivalent.
\begin{itemize}
\item[$(i)$] There is a connected graph $G$ such that $(X,d)$ is isometrically embedded in $(V(G), d_G)$.

\item[$(ii)$] The distance between any two points of $X$ is an integer number.
\end{itemize}
\end{corollary}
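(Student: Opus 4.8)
The plan is to deduce this corollary directly from Theorem~\ref{th3.5} by unwinding the definition of isometric embedding. The only thing that genuinely needs to be said is that, for a connected graph $G$, the assertion ``$(X,d)$ is isometrically embedded in $(V(G), d_G)$'' is equivalent to the assertion ``$(X,d)$ is isometric to a subspace of $(V(G), d_G)$'' used in Theorem~\ref{th3.5}.

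First I would record this equivalence. If $\Phi\colon X \to V(G)$ is an isometric embedding in the sense of the definition following Definition~\ref{def2.1}, then $\Phi$ restricts to a bijection of $X$ onto $\Phi(X) \subseteq V(G)$, and the equality $d(x,y) = d_G(\Phi(x),\Phi(y))$ for all $x,y \in X$ says precisely that this restriction is an isometry of $(X,d)$ onto the subspace $(\Phi(X), d_G|_{\Phi(X)\times\Phi(X)})$; hence $(X,d)$ is isometric to a subspace of $(V(G), d_G)$. Conversely, if $\Psi\colon X \to Z$ is an isometry onto a subspace $(Z, d_G|_{Z\times Z})$ of $(V(G), d_G)$, then $\Psi$, regarded as a map $X \to V(G)$ via the inclusion $Z \hookrightarrow V(G)$, satisfies $d(x,y) = d_G(\Psi(x),\Psi(y))$ for all $x,y \in X$, so it is an isometric embedding of $(X,d)$ in $(V(G), d_G)$. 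Thus statement $(i)$ of the corollary holds if and only if statement $(i)$ of Theorem~\ref{th3.5} holds.

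Second, I would note that condition $(ii)$ of the corollary is literally a restatement of condition $(ii)$ of Theorem~\ref{th3.5}: since $d$ takes values in $\RR^+ = [0,\infty)$ by Definition~\ref{def2.0}, every integer value of $d$ is automatically a non-negative integer, so ``the distance between any two points of $X$ is an integer number'' is the same as the inclusion $\{d(p,q)\colon p,q \in X\} \subseteq \mathbb{N}_0$. Combining the two observations, the equivalence $(i)\Leftrightarrow(ii)$ of the corollary is exactly the equivalence proved in Theorem~\ref{th3.5}, which completes the argument.

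I do not expect any real obstacle here: the whole content of the corollary is already contained in Theorem~\ref{th3.5}, and the proof is a routine translation between the phrases ``isometrically embedded in'' and ``isometric to a subspace of''. The only point worth spelling out carefully is that the restriction of the geodesic metric $d_G$ to a subset of $V(G)$ is again a metric, so that speaking of a ``subspace'' is legitimate; this is immediate from Definition~\ref{def2.0}.
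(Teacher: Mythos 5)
Your proposal is correct and matches the paper's intent: the paper states Corollary~\ref{cor3.6} without proof, treating it as an immediate restatement of Theorem~\ref{th3.5}, which is exactly what you establish by identifying ``isometrically embedded in'' with ``isometric to a subspace of'' and the integer-distance condition with inclusion in $\mathbb{N}_0$. No gaps; your spelled-out translation is simply a more explicit version of what the paper leaves implicit.
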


\begin{corollary}\label{cor3.7}
For every metric space $(X,d)$ there exists a connected graph $G$ and an injective mapping $\Phi: X \to V(G)$ such that
\begin{equation}\label{cor3.7_eq1}
d(x,y) \leqslant d_G (\Phi(x), \Phi(y)) < d(x,y) +1
\end{equation}
for all $x,y \in X$.
\end{corollary}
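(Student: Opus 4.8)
The plan is to reduce the statement to Theorem~\ref{th3.5} by replacing $d$ with its ``rounding up''. For $x, y \in X$ define
\[
\rho(x,y) = \lceil d(x,y) \rceil,
\]
where $\lceil t \rceil$ denotes the least integer not smaller than $t$. First I would verify that $\rho$ is a metric on $X$. Symmetry of $\rho$ is immediate from that of $d$. Next, $\rho(x,y) = 0$ exactly when $d(x,y) = 0$, i.e. when $x = y$, since $d(x,y) > 0$ forces $\lceil d(x,y) \rceil \geqslant 1$. Finally, the triangle inequality for $\rho$ follows from the triangle inequality for $d$ together with the subadditivity of the ceiling function, $\lceil a + b \rceil \leqslant \lceil a \rceil + \lceil b \rceil$ for $a, b \geqslant 0$: indeed $\rho(x,z) = \lceil d(x,z) \rceil \leqslant \lceil d(x,y) + d(y,z) \rceil \leqslant \lceil d(x,y) \rceil + \lceil d(y,z) \rceil = \rho(x,y) + \rho(y,z)$. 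By construction $\{\rho(p,q) : p,q \in X\} \subseteq \mathbb{N}_0$.

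Thus statement $(ii)$ of Theorem~\ref{th3.5} holds for the space $(X, \rho)$, and so statement $(i)$ of that theorem produces a connected graph $G$ together with an isometry of $(X, \rho)$ onto a subspace of $(V(G), d_G)$. In particular there is an injective mapping $\Phi \colon X \to V(G)$ with
\[
d_G(\Phi(x), \Phi(y)) = \rho(x,y) = \lceil d(x,y) \rceil
\]
for all $x, y \in X$. It remains only to combine this with the elementary inequalities $d(x,y) \leqslant \lceil d(x,y) \rceil < d(x,y) + 1$, valid for every real number $d(x,y) \geqslant 0$ (when $x = y$ these read $0 \leqslant 0 < 1$). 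This yields \eqref{cor3.7_eq1} and completes the argument.

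The argument is essentially frictionless; the only point that genuinely needs a line of justification is that rounding up preserves the triangle inequality, i.e. $\lceil a + b \rceil \leqslant \lceil a \rceil + \lceil b \rceil$, which holds because $a + b \leqslant \lceil a \rceil + \lceil b \rceil$ and the right-hand side is an integer, so that the least integer $\geqslant a + b$ cannot exceed it. Note that replacing the ceiling by the floor fails outright: $\lfloor d \rfloor$ need not satisfy the triangle inequality, and $\lfloor d \rfloor + 1$ violates the strict upper bound in \eqref{cor3.7_eq1} whenever $d(x,y)$ is a positive integer. Hence the ceiling function is the right choice here.
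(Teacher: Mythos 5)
Your proposal is correct and follows essentially the same route as the paper: pass to the ceiling metric $\lceil d \rceil$, embed $(X, \lceil d \rceil)$ isometrically into some $(V(G), d_G)$ via Theorem~\ref{th3.5} (Corollary~\ref{cor3.6}), and conclude from $t \leqslant \lceil t \rceil < t+1$. The only difference is that the paper cites a reference for the fact that $\lceil d \rceil$ is again a metric, while you verify it directly, which is a perfectly adequate substitute.
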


\begin{proof}
Let $\mathbb{R}^+ \ni t \mapsto \lceil t \rceil \in \mathbb{N}_0$ be the sailing function,
\begin{equation}\label{cor3.7_preq1}
\lceil t \rceil = \min \{n \in \mathbb{N}_0 : n \geqslant t\}
\end{equation}
for each $t \in \mathbb{R}^+$. It is known that, for each metric space $(X,d)$, the function
\begin{equation}\label{cor3.7_preq2}
X \times X \ni (x,y) \mapsto \lceil d(x,y) \rceil \in \mathbb{R}^+
\end{equation}
remains a metric on $X$. (See, for example, \cite[Corollary~1, p.~6]{Dobos1998}). Write $ \lceil d \rceil $ for the metric on $X$ defined by \eqref{cor3.7_preq2}. Since $\lceil t \rceil$ is integer for every $t \in \mathbb{R}^+$, Corollary~\ref{cor3.6} implies that the metric space $(X, \lceil d \rceil )$ is isometrically embedded in $(V(G), d_G)$ for some connected graph $G$. Let $\Phi: X \to V(G)$ be an isometric embedding of $(X, \lceil d \rceil )$ in $(V(G), d_G)$. Then
\begin{equation}\label{cor3.7_preq3}
\lceil d (x,y) \rceil = \lceil d \rceil (x,y) = d_G (\Phi(x), \Phi(y))
\end{equation}
holds for all $x,y \in X$. Moreover, we have
\begin{equation}\label{cor3.7_preq4}
t \leqslant \lceil t \rceil < t+1
\end{equation}
for every $t \in \mathbb{R}^+$ by \eqref{cor3.7_preq1}.
Now \eqref{cor3.7_eq1} follows from \eqref{cor3.7_preq2}--\eqref{cor3.7_preq4}.
\end{proof}

\section{Conclusion. Expected results}

Let us denote by $\mathfrak{MB}$ the class of all metric spaces $(X,d)$ such that the equality
$$
d(x,z) = d(x,y) + d(y,z)
$$
holds whenever $d(x,z) \geqslant \max \{ d(x,y), d(y,z)\}$ .

The following is the particular case of the classical Menger's result on the isometric embeddings into Euclidean spaces.

\begin{theorem}\label{th4.1} {\rm\cite{Menger1928}}
Let $(Y, \rho) \in \mathfrak{M B}$ be a metric space with $|Y| \geqslant 5$. Then $(Y, \rho)$ is isometric to some subspace of $\mathbb{R}$.
\end{theorem}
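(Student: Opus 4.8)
The plan is to realise $(Y,\rho)$ inside $\mathbb{R}$ by means of signed distances from a fixed base point. First I would note that $(Y,\rho)\in\mathfrak{MB}$ is equivalent to the statement that among any three distinct points of $Y$ one lies between the other two: if $x,y,z$ are distinct and $\rho(x,z)=\max\{\rho(x,y),\rho(y,z),\rho(x,z)\}$, then the defining property of $\mathfrak{MB}$ forces $\rho(x,z)=\rho(x,y)+\rho(y,z)$. I would also record the elementary facts that $y\in(x,z)$ implies $\rho(x,y)<\rho(x,z)$ and $\rho(y,z)<\rho(x,z)$, together with the ``chaining'' lemma: if $y$ lies between $x$ and $z$ and $u$ lies between $x$ and $y$, then $u$ lies between $x$ and $z$ and $y$ lies between $u$ and $z$ (one application of the triangle inequality). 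Fix $a\in Y$ and call $p,q\in Y\setminus\{a\}$ \emph{cocurrent} if $a$ does not lie between $p$ and $q$; equivalently $p=q$, or $p\in(a,q)$, or $q\in(a,p)$. The goal is to show that cocurrency partitions $Y\setminus\{a\}$ into at most two classes, and then to send one class onto a set of positive reals and the other onto a set of negative reals.

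Second, using only the $\mathfrak{MB}$ property I would establish: (a) no three points are pairwise non-cocurrent --- if $a$ lies between each two of $p,q,r$, then applying $\mathfrak{MB}$ to $\{p,q,r\}$ and substituting $\rho(p,q)=\rho(p,a)+\rho(a,q)$ and its analogues yields $\rho(a,\cdot)=0$ for one of the points, a contradiction; consequently there are at most two cocurrency classes; (b) cocurrency is transitive \emph{unless} $Y$ contains a \emph{pseudo-linear quadruple}, by which I mean four distinct points $a,p,q,r$ with $p,r\in(a,q)$, with $a$ and $q$ both between $p$ and $r$, and with $\rho(a,q)=\rho(p,r)$; equivalently, writing $t=\rho(a,p)$ and $s-t=\rho(a,r)$, one has $\rho(q,r)=t$, $\rho(q,p)=s-t$ and $\rho(a,q)=\rho(p,r)=s$ (the smallest instance is the metric $C_{4}$). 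For (b) I would run through the possible betweennesses inside $\{a,p,q\}$ and inside $\{a,q,r\}$: the chaining lemma settles every combination at once except the one in which $p$ and $r$ both lie in $(a,q)$, and there, assuming in addition $a\in(p,r)$, the $\mathfrak{MB}$ property applied to $\{p,q,r\}$ together with a short computation forces $q\in(p,r)$ and $\rho(a,p)+\rho(a,r)=\rho(a,q)$, i.e. $\{a,p,q,r\}$ is a pseudo-linear quadruple.

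Third --- and this is where $|Y|\geqslant 5$ enters and the only real work lies --- I would show that a metric space in $\mathfrak{MB}$ with at least five points has no pseudo-linear quadruple. Assume $\{a,p,q,r\}$ is one, with $\rho(a,q)=\rho(p,r)=s$, $\rho(a,p)=\rho(q,r)=t$, $\rho(a,r)=\rho(q,p)=s-t$, $0<t\leqslant s/2$, and let $e$ be a fifth point. The betweenness type of $\{a,q,e\}$ is one of three (namely $e\in(a,q)$, or $a\in(q,e)$, or $q\in(a,e)$), and similarly for $\{p,r,e\}$; the order-two isometry $(a,p,q,r)\mapsto(q,r,a,p)$ of the quadruple reduces these nine combinations to a few, and in each of them the betweenness types of $\{a,p,e\}$, $\{a,r,e\}$, $\{q,p,e\}$ and $\{q,r,e\}$, tested against the $\mathfrak{MB}$ property, reduce (after short linear manipulations) to a coincidence among the five points or to a forced zero distance --- in every case a contradiction. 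I expect this finite case analysis to be the main obstacle: it is routine but must be carried through with care, and it is exactly here that four points no longer suffice, the quadruple itself being a bona fide four-point member of $\mathfrak{MB}$.

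Finally I would assemble the pieces. Since $|Y|\geqslant 5$, there is no pseudo-linear quadruple, so by the second step cocurrency is an equivalence relation on $Y\setminus\{a\}$ with two classes $C_{1}$ and $C_{2}$, one possibly empty. Define $f\colon Y\to\mathbb{R}$ by $f(a)=0$, $f(x)=\rho(a,x)$ for $x\in C_{1}$ and $f(x)=-\rho(a,x)$ for $x\in C_{2}$. If $x$ and $y$ lie in different classes then $a$ lies between them, so $\rho(x,y)=\rho(a,x)+\rho(a,y)=|f(x)-f(y)|$; if they lie in the same class then, since $a$ is not between them, the $\mathfrak{MB}$ property applied to $\{a,x,y\}$ gives $x\in(a,y)$ or $y\in(a,x)$, and in either case $\rho(x,y)=|\rho(a,x)-\rho(a,y)|=|f(x)-f(y)|$; the same two formulas show that $f$ is injective. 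Thus $f$ is an isometry of $(Y,\rho)$ onto $f(Y)\subseteq\mathbb{R}$, which is the claim.
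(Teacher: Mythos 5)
The paper does not prove this theorem at all: it is quoted from Menger's 1928 paper (an elementary treatment along exactly your lines is in the cited work of Dovgoshey and Dordovskyi on betweenness and isometric embeddings), so there is no in-paper argument to compare with. Your architecture is the classical one and its verifiable parts are correct: membership in $\mathfrak{MB}$ does give that among any three distinct points one lies between the other two; your step (a) (no three points can have $a$ between each pair) and step (b) (a failure of transitivity of cocurrency forces the configuration $p,r\in(a,q)$, $a\in(p,r)$, and then the triple $\{p,q,r\}$ forces $q\in(p,r)$ and $\rho(a,q)=\rho(a,p)+\rho(a,r)$, i.e.\ a pseudo-linear quadruple) both check out, as does the final signed-distance map $f$ once transitivity and the two-class structure are in hand.

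The genuine gap is your third step, and you say so yourself: the exclusion of pseudo-linear quadruples in the presence of a fifth point is the only place where the hypothesis $|Y|\geqslant 5$ enters and the only place where the theorem's real content lies, yet it is asserted (``routine but must be carried through with care'') rather than proved. As written, the argument for Theorem~\ref{th4.1} is therefore incomplete. The claim is true and your plan does close, but the eliminations are exactly where errors creep in: for instance, with $\rho(a,p)=\rho(q,r)=s$, $\rho(p,q)=\rho(a,r)=t$, $\rho(a,q)=\rho(p,r)=s+t$ and a fifth point $e$ with distances $A,P,Q,R$ to $a,p,q,r$, the subcase $A+Q=s+t$, $P+R=s+t$, $A=P+s$ gives $Q=t-P$, $R=s+t-P$, and the triple $\{a,r,e\}$ then admits no linear relation except at $P=0$ or $P=t$, both excluded; a single sign slip in that elimination produces a spurious ``consistent'' fifth point, so each of the (symmetry-reduced) cases must actually be written out, not waved at. Until that finite case analysis is carried through, you have a correct strategy but not a proof; with it carried through, your argument is a complete and self-contained substitute for the citation to Menger.
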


It was also proved by K. Menger in \cite{Menger1928}, that a four-point metric space $(X,d) \in \mathfrak{M B}$ cannot be isometrically embedded in $\mathbb{R}$ if and only if the points of $X$ can be labelled $x_1, x_2, x_3, x_4$ such that
$$
d(x_1, x_2) = d(x_3, x_4) = s, \quad d(x_2, x_3) = d(x_1, x_4) = t,
$$
\begin{equation}\label{s4_eq1}
d(x_1, x_3) = d(x_2, x_4) = s + t,
\end{equation}
where $s$ and $t$ are some positive constants.

The ordered four-point metric spaces $\{x_1, x_2, x_3, x_4\}$ satisfying~\eqref{s4_eq1} are sometimes referred as \emph{pseudo-linear quadruples}. If \eqref{s4_eq1} holds with $s=t$, then we say that the corresponding $\{x_1, x_2, x_3, x_4\}$ is an \emph{equilateral} pseudo-linear quadruple.

Recall that an infinite graph $R$ of the form
\begin{align*}
V (R) &= \{v_1, v_2, \ldots , v_n, v_{n+1}, \ldots \},\\
E(R) &= \{ \{v_1, v_2\}, \ldots, \{ v_n, v_{n+1} \}, \ldots \}
\end{align*}
is called a \emph{ray}.

Moreover, a graph $DR$ is called a \emph{double ray} if
$$
V(DR) = \{ \ldots, v_{-2}, v_{-1}, v_0, v_1, v_2, \ldots \}
$$
and
$$
E(DR)= \{ \ldots, \{v_{-2}, v_{-1}\}, \{v_{-1}, v_0\}, \{v_0, v_1\}, \{v_1, v_2\}, \ldots \}.
$$

The following conjecture presents a reformulation of above-mentioned Menger's result in the language of graph theory.

\begin{conjecture}\label{con4.2}
Let $G$ be a nonempty connected graph.
Then $(V(G), d_G) \in \mathfrak{M B}$ if and only if one from the following statements hold.
\begin{itemize}
\item [$(i)$] $G$ is isomorphic to a path.

\item [$(ii)$] $G$ is isomorphic to cycle $C_{4}$.

\item [$(iii)$] $G$ is isomorphic to ray $R$.

\item [$(iv)$] $G$ is isomorphic to double ray $DR$.
\end{itemize}
\end{conjecture}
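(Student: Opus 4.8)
The plan is to prove both implications by first pinning down, for any nonempty connected $G$ with $(V(G),d_G)\in\mathfrak{MB}$, two elementary local restrictions (no triangles, maximum degree at most $2$), which forces $G$ into a short list, and then ruling out the unwanted members of that list; the reverse implication is a direct verification for the four listed families.

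For \emph{sufficiency} I would argue directly. If $G$ is a path, a ray, or a double ray, index its vertices by an interval of $\mathbb{Z}$ so that $d_G(v_i,v_j)=|i-j|$; then for any three indices $i\leqslant j\leqslant k$ the hypothesis $d(x,z)\geqslant\max\{d(x,y),d(y,z)\}$ is satisfied non-trivially only when $\{x,z\}=\{v_i,v_k\}$, in which case $k-i=(j-i)+(k-j)$, so the $\mathfrak{MB}$-condition holds. If $G=C_4$, any three distinct vertices contain exactly one antipodal pair (of distance $2$) while the remaining vertex is adjacent to both members of that pair, so again the hypothesis is non-trivial only for that pair, where it reads $2=1+1$. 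Triples with a repeated point satisfy the condition trivially. Hence all four families lie in $\mathfrak{MB}$.

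For \emph{necessity}, let $G$ be nonempty and connected with $(V(G),d_G)\in\mathfrak{MB}$. First I would show $G$ is triangle-free: if $a,b,c$ are pairwise adjacent then $d_G(a,b)=d_G(b,c)=d_G(a,c)=1$ by Lemma~\ref{lem2.6}, so $d(a,c)=\max\{d(a,b),d(b,c)\}$ while $d(a,c)=1\neq 2=d(a,b)+d(b,c)$, contradicting $\mathfrak{MB}$. Next, $G$ has maximum degree at most $2$: if a vertex $v$ had three distinct neighbours $a,b,c$, then triangle-freeness makes them pairwise non-adjacent, and $2\leqslant d_G(a,b)\leqslant d_G(a,v)+d_G(v,b)=2$ (Lemma~\ref{lem2.6}) forces $d_G(a,b)=d_G(b,c)=d_G(a,c)=2$, so the triple $(a,b,c)$ again contradicts $\mathfrak{MB}$ since $2\neq 4$. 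By the standard description of connected graphs of maximum degree at most $2$ containing an edge, $G$ is then isomorphic to a path, to a cycle $C_n$ with $n\geqslant 3$, to a ray, or to a double ray, and $C_3$ is already excluded by triangle-freeness.

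It remains to eliminate $C_n$ for $n\geqslant 5$, which I expect to be the main obstacle, since the auxiliary triple must be chosen so that the largest of its three pairwise distances is attained \emph{twice}. In $C_n=(v_1,\dots,v_n,v_1)$ with $n\geqslant 5$, the interval $[n/3,n/2)$ has length $n/6\geqslant 5/6$ and I would check it always contains an integer $r$ (with $2\leqslant r<n/2$). Taking $x=v_{1+r}$, $y=v_1$, $z=v_{1-r}$ (indices modulo $n$), these are pairwise distinct because $0<2r<n$, and $d_G(x,y)=d_G(y,z)=r$ while $d_G(x,z)=\min\{2r,\,n-2r\}=n-2r\leqslant r$ because $r\geqslant n/3$; hence $d_G(x,y)=r\geqslant\max\{d_G(x,z),d_G(z,y)\}$, so the $\mathfrak{MB}$-condition would give $r=d_G(x,z)+d_G(z,y)=(n-2r)+r=n-r$, i.e.\ $r=n/2$, contradicting $r<n/2$. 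This range $[n/3,n/2)$ contains no integer exactly when $n=4$, which matches the fact that $C_4\in\mathfrak{MB}$, so the four listed graphs are precisely the connected ones with $d_G\in\mathfrak{MB}$. As an alternative for the case $|V(G)|\geqslant 5$, one could instead invoke Theorem~\ref{th4.1}: then $(V(G),d_G)$ embeds isometrically into $\mathbb{R}$, and a uniformly discrete subset of $\mathbb{R}$ on which a connected-graph geodesic metric is realized must be a set of consecutive integers (otherwise Lemma~\ref{lem2.7} would produce a vertex strictly between two consecutive ones), forcing a path, ray, or double ray; the cases $|V(G)|\leqslant 4$ are then dispatched by the triangle-free observation above.
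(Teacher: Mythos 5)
Be aware that the paper contains no proof of this statement: it is posed as Conjecture~\ref{con4.2} and left open, so there is nothing of the author's to compare your argument with. On its own merits, your proof looks correct and essentially complete, and would in fact settle the conjecture. The sufficiency check for paths, rays, double rays and $C_4$ is routine and right (in each case the weak maximum of the three pairwise distances is only attained by the ``outer'' pair, where additivity holds). For necessity, the two local observations (triangle-freeness via the triple of mutually adjacent vertices, and maximum degree at most $2$ via three neighbours of a common vertex, which are pairwise at distance exactly $2$) are sound and use only Lemma~\ref{lem2.6}; together with the standard classification of connected graphs of maximum degree at most $2$ they reduce everything to paths, rays, double rays and cycles. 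Your elimination of $C_n$ for $n\geqslant 5$ is the genuinely non-trivial step and it works: choosing an integer $r$ with $n/3\leqslant r<n/2$ (length $n/6\geqslant 1$ for $n\geqslant 6$; $r=2$ for $n=5$) produces an isosceles triple with two sides equal to the maximum distance $r$ and base $n-2r\leqslant r$, and the $\mathfrak{MB}$-condition forces $r=n-r$, a contradiction; I verified the arithmetic, including the equilateral case $n=3r$.

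Three points should be tightened in a written-up version. First, the classification ``connected, every degree at most $2$, at least one edge $\Rightarrow$ path, cycle, ray or double ray'' is folklore but is being used for graphs of arbitrary order, so prove it or cite it explicitly (note such a graph is automatically countable). Second, make the $n=5$ case of the integer-in-$[n/3,n/2)$ claim explicit, since there the interval has length $5/6<1$; you flag this but it deserves one line. Third, your alternative route via Theorem~\ref{th4.1} is rougher as stated: the claim that the cases $|V(G)|\leqslant 4$ are ``dispatched by the triangle-free observation'' is not enough by itself (the star $K_{1,3}$ is triangle-free yet its three leaves are pairwise at distance $2$, so you still need the degree bound), and the embedding argument should note that integrality of all distances places the image inside a translate of $\mathbb{Z}$ before the gap argument with Lemma~\ref{lem2.7} shows the image is a set of consecutive integers and Lemma~\ref{lem2.6} then recovers the path/ray/double ray structure. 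Since your primary argument does not need Theorem~\ref{th4.1} at all, I would either fix or drop that alternative.
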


The following theorem is a special case of Theorem~1 of paper \cite{DP2011SMJ}.

\begin{theorem}\label{th4.3}
Let $(X, d)$ be a metric space and $\{x_1, x_2, x_3, x_4\}$ be an ordered four-point subspace of $(X,d)$. Write
$$
p =d(x_1, x_2) + d(x_2, x_3) + d(x_3, x_4) + d(x_4, x_1).
$$
Then we have
\begin{equation}\label{th4.3_eq1}
d(x_1, x_3) d(x_2, x_4) - d(x_1, x_2) d(x_3, x_4) - d(x_4, x_1) d(x_2, x_3) \leqslant \frac{p^2}{8}.
\end{equation}
Equality in \eqref{th4.3_eq1} is attained if and only if $\{ x_1, x_2, x_3, x_4 \}$ is an equilateral pseudo-linear quadruple.
\end{theorem}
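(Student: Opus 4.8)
The plan is to replace the two ``diagonal'' distances $d(x_1,x_3)$ and $d(x_2,x_4)$ by triangle-inequality bounds and then reduce \eqref{th4.3_eq1} to a one-variable scalar inequality. Set $a=d(x_1,x_2)$, $b=d(x_2,x_3)$, $c=d(x_3,x_4)$, $e=d(x_4,x_1)$, $f=d(x_1,x_3)$, $g=d(x_2,x_4)$, so that $p=a+b+c+e$ and the left-hand side of \eqref{th4.3_eq1} equals $Q:=fg-ac-be$. The triangle inequality gives $0\le f\le\min\{a+b,\,c+e\}$ and $0\le g\le\min\{a+e,\,b+c\}$, hence $fg\le\min\{a+b,c+e\}\cdot\min\{a+e,b+c\}$, so it suffices to bound this product minus $ac+be$.

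To remove the case distinction hidden in the minima I would use a symmetry. The cyclic relabelling $(x_1,x_2,x_3,x_4)\mapsto(x_2,x_3,x_4,x_1)$ sends $(a,b,c,e,f,g)$ to $(b,c,e,a,g,f)$, so it fixes both $Q$ and $p$; moreover it cyclically permutes the four sign patterns of the pair $\big(\operatorname{sign}((a+b)-(c+e)),\ \operatorname{sign}((a+e)-(b+c))\big)$. Therefore, after relabelling, we may assume $a+b\le c+e$ and $a+e\le b+c$. Adding these inequalities gives $a\le c$, and under these hypotheses $\min\{a+b,c+e\}\cdot\min\{a+e,b+c\}=(a+b)(a+e)$. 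A direct expansion gives $(a+b)(a+e)-ac-be=a\,(a+b+e-c)=a\,(p-2c)$, so the whole matter reduces to the inequality $a(p-2c)\le p^2/8$. This is clear: if $p-2c\le0$ the left side is nonpositive; if $p-2c>0$ then $a(p-2c)\le c(p-2c)\le p^2/8$, the first step by $a\le c$ and the second because $c(p-2c)=\tfrac{p^2}{8}-2\big(c-\tfrac p4\big)^2$.

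For the equality part one tracks back through these three inequalities. Equality in \eqref{th4.3_eq1} forces $p-2c>0$, then $a=c$, then $c=p/4$; thus $a=c=p/4$ and $b+e=p/2$. But now the case hypotheses $a+b\le c+e$ and $a+e\le b+c$ both reduce to $b=e$, so $a=b=c=e=p/4$; and equality $fg=(a+b)(a+e)$ together with $f\le a+b$, $g\le a+e$ and $a=p/4>0$ forces $f=g=p/2$. Hence $d(x_1,x_2)=d(x_3,x_4)=p/4$, $d(x_2,x_3)=d(x_4,x_1)=p/4$, $d(x_1,x_3)=d(x_2,x_4)=p/2$, i.e.\ the relabelled (and therefore also the original) quadruple is an equilateral pseudo-linear quadruple with $s=t=p/4$ in the notation of \eqref{s4_eq1}. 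Conversely, for an equilateral pseudo-linear quadruple with common side $\sigma$ one has $p=4\sigma$ and $Q=(2\sigma)^2-\sigma^2-\sigma^2=2\sigma^2=(4\sigma)^2/8=p^2/8$.

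The inequality itself is routine once the above reduction is in place; the step I expect to require the most care is the equality characterization, where one must make sure that no equality case is lost in passing from $fg$ to $\min\{a+b,c+e\}\cdot\min\{a+e,b+c\}$ — that is, one must keep track of precisely which triangle inequalities are forced to be tight (here $f=a+b$, $g=a+e$ together with the two relations defining the chosen case) and verify that they rigidify the configuration to the stated pseudo-linear one. A minor point to dispatch at the outset is that $\{x_1,x_2,x_3,x_4\}$ being a genuine four-point subspace excludes $p=0$, so that dividing by $p-2c$ and comparing with $p^2/8>0$ are legitimate.
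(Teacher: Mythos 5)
Your proof is correct, and it is a genuinely different route from the paper: the paper gives no argument at all for Theorem~\ref{th4.3}, deducing it as a special case of Theorem~1 of \cite{DP2011SMJ}, whereas you supply a self-contained elementary proof. I checked the key steps: the triangle bounds $f\leqslant\min\{a+b,c+e\}$, $g\leqslant\min\{a+e,b+c\}$ are the right ones; the cyclic shift acts on $(u,v)=\bigl((a+b)-(c+e),(a+e)-(b+c)\bigr)$ as $(u,v)\mapsto(-v,u)$, so one of the four shifts indeed lands in the closed quadrant $u\leqslant0$, $v\leqslant0$ while fixing $Q$, $p$ and the (equilateral) pseudo-linear property, so the reduction is legitimate even when some signs vanish; the identity $(a+b)(a+e)-ac-be=a(p-2c)$ and the completion $c(p-2c)=\tfrac{p^2}{8}-2\bigl(c-\tfrac p4\bigr)^2$ are correct; and the equality analysis correctly forces, in that order, $p-2c>0$, $a=c=p/4$, then $b=e$ from the two case hypotheses, and finally $f=g=p/2$ from $fg=(a+b)(a+e)$ with $f\leqslant a+b$, $g\leqslant a+e$, with the converse computation $Q=2\sigma^2=p^2/8$ also checking out; distinctness of the four points gives $p>0$, as you note. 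What your argument buys is a short, purely triangle-inequality proof that makes the extremal role of the equilateral pseudo-linear quadruple transparent and keeps this paper self-contained; what the citation buys is that \cite{DP2011SMJ} proves a more general statement of which \eqref{th4.3_eq1} is only a special case, so the paper's route imports that stronger result at no local cost. The only stylistic caveat is that your ``without loss of generality'' silently uses that the stated equality condition is invariant under cyclic relabelling (the diagonal pairs $\{x_1,x_3\}$, $\{x_2,x_4\}$ are preserved as a set); you do say this, but in a written version it deserves one explicit sentence.
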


Recall that a subgraph $H$ of a graph $G$ is called an \emph{induced subgraph} of $G$ if any two vertices $u,v $ of $H$ are adjacent in $H$, whenever $u,v$ are adjacent in $G$,
$$
\left( \{u,v\} \in E(H)\right) \Longleftrightarrow \left( \{u,v\} \in E(G) \right).
$$

The next conjecture is a reformulation of the second path of Theorem~\ref{th4.3} for case of geodesic distances on graphs.

\begin{conjecture}\label{con4.4}
Let $G$ be a nonempty connected graph and let $X$ be a four-point subspace of $(V(G), d_G)$. Then the following statements are equivalent.
\begin{enumerate}
\item [\((i)\)] If \(H\) is the induced subgraph of \(G\) and \(V(H) = X\), then \(H\) is a cycle.
\item [\((ii)\)] The points of \(X\) can be ordered such that the ordered four-point subspace \(X = \{x_1, x_2, x_3, x_4\}\) of $(V(G), d_G)$ is an equilateral pseudo-linear quadruple.
\end{enumerate}
\end{conjecture}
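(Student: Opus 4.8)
I would prove the two implications separately, using throughout that $d_G$ takes values in $\mathbb{N}_0$, so that in $(ii)$ the common ``side'' value $t := d_G(x_1,x_2) = d_G(x_2,x_3) = d_G(x_3,x_4) = d_G(x_4,x_1)$ is a positive integer and $d_G(x_1,x_3) = d_G(x_2,x_4) = 2t$, and that a four-vertex graph is a cycle precisely when it is isomorphic to $C_4$. For $(i) \Rightarrow (ii)$: if the induced subgraph $H$ on $X$ is a cycle then $H \cong C_4$, and we may label $X$ as $x_1,x_2,x_3,x_4$ so that $H = (x_1,x_2,x_3,x_4,x_1)$. Since $H$ is induced, $x_1,x_2,x_3,x_4$ are consecutively adjacent in $G$ while $\{x_1,x_3\}$ and $\{x_2,x_4\}$ are non-edges of $G$; by Lemma~\ref{lem2.6} the four sides have $d_G = 1$ and the two diagonals have $d_G \geqslant 2$, and the triangle inequality forces $d_G(x_1,x_3) \leqslant d_G(x_1,x_2) + d_G(x_2,x_3) = 2$ and likewise $d_G(x_2,x_4) = 2$. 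Hence $X$ is an equilateral pseudo-linear quadruple with $s = t = 1$.

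For $(ii) \Rightarrow (i)$ the case $t = 1$ is clean and, I believe, the genuine content of the conjecture: the four side-pairs then have distance $1$, so are edges of $G$ by Lemma~\ref{lem2.6}, while the two diagonal-pairs have distance $2$, so are non-edges; thus the induced subgraph on $X$ has precisely the four edges $\{x_1,x_2\}, \{x_2,x_3\}, \{x_3,x_4\}, \{x_4,x_1\}$, i.e.\ it is $C_4$. This gives the equivalence when $t = 1$. For $t \geqslant 2$, however, $(ii) \Rightarrow (i)$ \emph{fails as stated}: then all six pairwise distances in $X$ exceed $1$, so by Lemma~\ref{lem2.6} the induced subgraph on $X$ is edgeless and is not a cycle. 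A concrete witness is $G = C_8 = (v_1,\dots,v_8,v_1)$ with $X = \{v_1,v_3,v_5,v_7\}$: under the ordering $x_i := v_{2i-1}$ the quadruple $X$ is equilateral pseudo-linear with $s = t = 2$, yet its induced subgraph in $G$ has no edges. So the statement should be restricted to the case where the common side-distance equals $1$, in which the argument above is a complete proof; this restriction is the first thing I would flag.

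It is tempting to rescue the general case by replacing $(i)$ with the assertion that $G$ contains an induced, isometrically embedded cycle $C \cong C_{4t}$ with $X \subseteq V(C)$ and the points of $X$ splitting $C$ into four equal arcs. One can at least build a candidate $C$ as the union $A \cup B$ of a minimal path $A$ from $x_1$ to $x_3$ through $x_2$ (of length $2t$, since $d_G(x_1,x_2) + d_G(x_2,x_3) = 2t = d_G(x_1,x_3)$) and a minimal path $B$ from $x_3$ to $x_1$ through $x_4$; by Lemma~\ref{lem2.7} the distance of a vertex of such a path from an endpoint is its ``position'' along the path, so a common vertex $w$ of $A$ and $B$ other than $x_1,x_3$, at position $c \in (0,2t)$ on each, would force $d_G(x_2,x_4) \leqslant d_G(x_2,w) + d_G(w,x_4) \leqslant |t-c| + |c-t| = 2|t-c| < 2t$, contradicting $d_G(x_2,x_4) = 2t$; hence $A \cup B$ is a genuine $C_{4t}$ through the quarter-vertices. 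But even this reformulation is false: taking $G$ to be $C_8$ together with one new vertex $u$ and edges $\{v_2,u\}, \{u,v_6\}$, the quadruple $X = \{v_1,v_3,v_5,v_7\}$ remains equilateral pseudo-linear ($s = t = 2$), while the only $8$-cycle of $G$ through all four quarter-vertices is the original $(v_1,\dots,v_8,v_1)$, which is not isometrically embedded since $d_G(v_2,v_6) = 2$. The real obstacle is thus that for $t \geqslant 2$ there seems to be no purely combinatorial condition on $G$ and on the placement of $X$ equivalent to $X$ being an equilateral pseudo-linear quadruple; identifying the correct general statement — perhaps under extra connectivity or girth hypotheses on $G$ — is where the difficulty genuinely lies, and absent that, the conjecture should be asserted only as its $t = 1$ instance, which I would prove as above.
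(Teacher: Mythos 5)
This statement is Conjecture~\ref{con4.4}: the paper places it in the concluding ``Expected results'' section and offers no proof, so there is no argument of the author to compare yours against, and your proposal must be judged on its own merits. On that basis it is correct and essentially settles the matter. Your $(i)\Rightarrow(ii)$ argument is right: a four-vertex induced cycle is a $C_4$, Lemma~\ref{lem2.6} gives $d_G=1$ on the four sides and $d_G\geqslant 2$ on the two non-edges, and the triangle inequality caps the diagonals at $2$, so $X$ is an equilateral pseudo-linear quadruple with $s=t=1$. The converse for side length $1$ is equally correct. Most importantly, your counterexample is valid: in $G=C_8$ the set $X=\{v_1,v_3,v_5,v_7\}$ has all ``side'' distances equal to $2$ and both ``diagonal'' distances equal to $4$, so $(ii)$ holds with $s=t=2$, while no two points of $X$ are adjacent, so the induced subgraph on $X$ is edgeless and $(i)$ fails. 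Hence $(ii)\Rightarrow(i)$, and with it the conjecture as literally stated, is false; the provable content is exactly the $s=t=1$ equivalence you isolate, and flagging that restriction is the appropriate conclusion here rather than a gap in your argument.

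Your exploration of a repair is also sound in its details: the union of a geodesic $x_1$--$x_2$--$x_3$ path and a geodesic $x_3$--$x_4$--$x_1$ path is indeed an internally disjoint cycle of length $4t$ through the four points (the positional argument via Lemma~\ref{lem2.7}, giving $d_G(x_2,x_4)\leqslant 2|t-c|<2t$ at a putative common interior vertex, is correct), and your second example ($C_8$ with an extra vertex $u$ joined to $v_2$ and $v_6$) does refute the version of $(i)$ demanding an induced, isometrically embedded $C_{4t}$ through $X$, since $d_G(v_2,v_6)=2$ while $X$ remains equilateral pseudo-linear with $s=t=2$. One small addendum: in that example the $8$-cycle is still an induced (non-isometric) cycle through $X$, so it does not rule out the weaker reformulation that merely asks for an induced cycle splitting $X$ into equal arcs; but that weaker version fails in the other direction (e.g.\ $C_{12}$ with a path of length $2$ attached between $v_1$ and $v_7$ keeps the $12$-cycle induced yet destroys the pseudo-linear distances), so your overall conclusion --- that for $t\geqslant 2$ no such combinatorial characterization is available along these lines and the conjecture should be asserted only in its $t=1$ instance --- stands.
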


\section*{Acknowledgments}

The author was supported by grant 359772 of the Academy of Finland.

\section*{Financial disclosure}

None reported.

\section*{Conflict of interest}

The author declares that there is no potential conflict of interest.

\end{document}